%
%
\documentclass{article}
\usepackage{fullpage}

\usepackage[utf8]{inputenc} %
\usepackage[T1]{fontenc}    %
\usepackage{hyperref}       %
\usepackage{url}            %
\usepackage{booktabs}       %
\usepackage{amsfonts}       %
\usepackage{nicefrac}       %
\usepackage{microtype}      %
\usepackage{dsfont}

\usepackage{algorithmicx}
\usepackage{algpseudocode}
\usepackage{algorithm}
\usepackage{amsmath,amsthm}
\usepackage{enumitem}

\usepackage{graphicx}
\usepackage{subcaption}

\title{Local SGD Converges Fast and Communicates Little}

\newtheorem{theorem}{Theorem}[section]
\newtheorem{lemma}[theorem]{Lemma}
\newtheorem{corollary}[theorem]{Corollary}
\newtheorem{definition}[theorem]{Definition}
\newtheorem{remark}[theorem]{Remark}

  \renewcommand{\aa}{\mathbf{a}}
  \providecommand{\bb}{\mathbf{b}}

  \let\ggg\gg
  \renewcommand{\gg}{\mathbf{g}}

  \let\lll\ll
  \renewcommand{\ll}{\mathbf{l}}

  \providecommand{\xx}{\mathbf{x}}
  \providecommand{\yy}{\mathbf{y}}

\usepackage[sort,numbers]{natbib}

\newcommand{\R}{\mathbb{R}}

\newcommand{\norm}[1]{\left\lVert #1\right\rVert}
\DeclareMathOperator{\E}{\mathbb{E}}
\DeclareMathOperator{\argmin}{arg\,min}
\newcommand{\lin}[1]{\ensuremath \left\langle #1 \right\rangle}

\usepackage{todonotes}
\newcommand{\seb}[1]{\todo[color=red!20,size=\small,inline,caption={}]{Comment: #1}{}}

\author{
  Sebastian U. Stich \\
  EPFL, Switzerland
}
\date{}

\allowdisplaybreaks

\newlength{\iclralgotopsep}
\setlength{\iclralgotopsep}{-.8cm}

\begin{document}

\maketitle

\begin{abstract}
Mini-batch stochastic gradient descent (SGD) is state of the art in large scale distributed training.
The scheme can reach a linear speedup with respect to the number of workers, but
this is rarely seen in practice as the scheme often suffers from large network delays and bandwidth limits.
To overcome this communication bottleneck recent works propose to reduce the communication frequency. An algorithm of this type is \emph{local SGD} that runs SGD independently in parallel on different workers and averages the sequences only once in a while. 
This scheme shows promising results in practice, but eluded thorough theoretical analysis.
    
We prove concise convergence rates for local SGD on convex problems and show that it
converges at the same rate as mini-batch SGD in terms of number of evaluated gradients, that is, the scheme achieves linear speedup in the number of workers and mini-batch size. 
The number of  communication rounds can be reduced up to a factor of $T^{1/2}$---where $T$ denotes the number of total steps---compared to mini-batch SGD. This also holds for asynchronous implementations.

Local SGD can also be used for large scale training of deep learning models. The results shown here aim serving as a guideline to further explore the theoretical and practical aspects of local SGD in these applications.
\end{abstract}
\section{Introduction} 
Stochastic Gradient Descent (SGD)~\citep{Robbins:1951sgd} 
consists of iterations of the form
\begin{align}
 \xx_{t+1} &:= \xx_t - \eta_t \gg_t\,, \label{eq:sgd}
\end{align}
for iterates (weights) $\xx_t,\xx_{t+1} \in \R^d$, stepsize (learning rate) $\eta_t > 0$, and stochastic gradient $\gg_t \in \R^d$ with the property $\E \gg_t = \nabla f(\xx_t)$, for a loss function $f \colon \R^d  \to \R$.
This scheme can easily be parallelized  by replacing $\gg_t$ in~\eqref{eq:sgd} by an average of stochastic gradients that are independently computed in parallel on separate workers (\emph{parallel SGD}). This simple scheme has a major drawback: in each iteration the results of the computations on the workers have to be shared with the other workers to compute the next iterate $\xx_{t+1}$. Communication has been reported to be a major bottleneck for many large scale deep learning applications, see e.g.~\citep{Seide2015:1bit,Alistarh2017:qsgd,Zhang2017:zipml,Lin2018:deep}. 
\emph{Mini-batch} parallel SGD addresses this issue by increasing the compute to communication ratio. Each worker computes a mini-batch of size $b \geq 1$ before communication. This scheme is implemented in state-of-the-art distributed deep learning frameworks \citep{abadi2016tensorflow,paszke2017automatic,seide2016cntk}. Recent work in~\citep{You2017:scaling,Goyal2017:large} explores various limitations of this approach, as in general it is reported that performance degrades for too large mini-batch sizes~\citep{keskar2016large,Ma2018interpolation,Yin18a:diversity}.

In this work we follow an orthogonal approach, still with the goal to increase the compute to communication ratio: Instead of increasing the mini-batch size, we reduce the communication frequency. Rather than keeping the sequences on different machines in sync, we allow them to evolve \emph{locally} on each machine, independent from each other, and only average the sequences once in a while (\emph{local SGD}). Such strategies have been explored widely in the literature, under various names.

An extreme instance of this concept is \emph{one-shot SGD}~\citep{Mann2009:parallelSGD,Zinkevich2010:parallelSGD} where the local sequences are only exchanged once, after the local runs have converged. \citet{Zhang2013:averaging} show statistical convergence (see also~\citep{Shamir2014:distributed,Godichon2017:oneshot,Jain2018:leastsquares}), but the analysis restricts the algorithm to at most one pass over the data, which is in general not enough for the training error to converge.
More practical are schemes that perform more frequent averaging of the parallel sequences, as e.g.\
\citep{McDonald2010:IPM} for perceptron training (\emph{iterative parameter mixing}), see also~\citep{Coppola2015:IPM},
\citep{Zhang2014:averaging,Bijral:2016vs,Zhang2016:parallelLocalSGD} for the training of deep neural networks (\emph{model averaging}) or in federated learning~\citep{McMahan17federated}.

The question of how often communication rounds need to be initiated has eluded a concise theoretical answer so far.
Whilst there is practical evidence, the theory does not even resolve the question whether averaging helps when optimizing convex functions. Concretely, whether running local SGD on $K$ workers is $K$ times faster than running just a single instance of SGD on one worker.\footnote{On convex functions, the average of the $K$ local solutions can of course only decrease the objective value, but convexity does not imply that the averaged point is $K$ times better.}

We fill this gap in the literature and provide a concise convergence analysis of local SGD. 
We show that averaging helps. Frequent synchronization of $K$ local sequences increases the convergence rate by a factor of $K$, i.e.\ a linear speedup can be attained. Thus, local SGD is as efficient as parallel mini-batch SGD in terms of computation, but the communication cost can be drastically reduced.

\begin{figure}
    \centering
    \vspace{-.8cm}
        \includegraphics[width=0.48\textwidth]{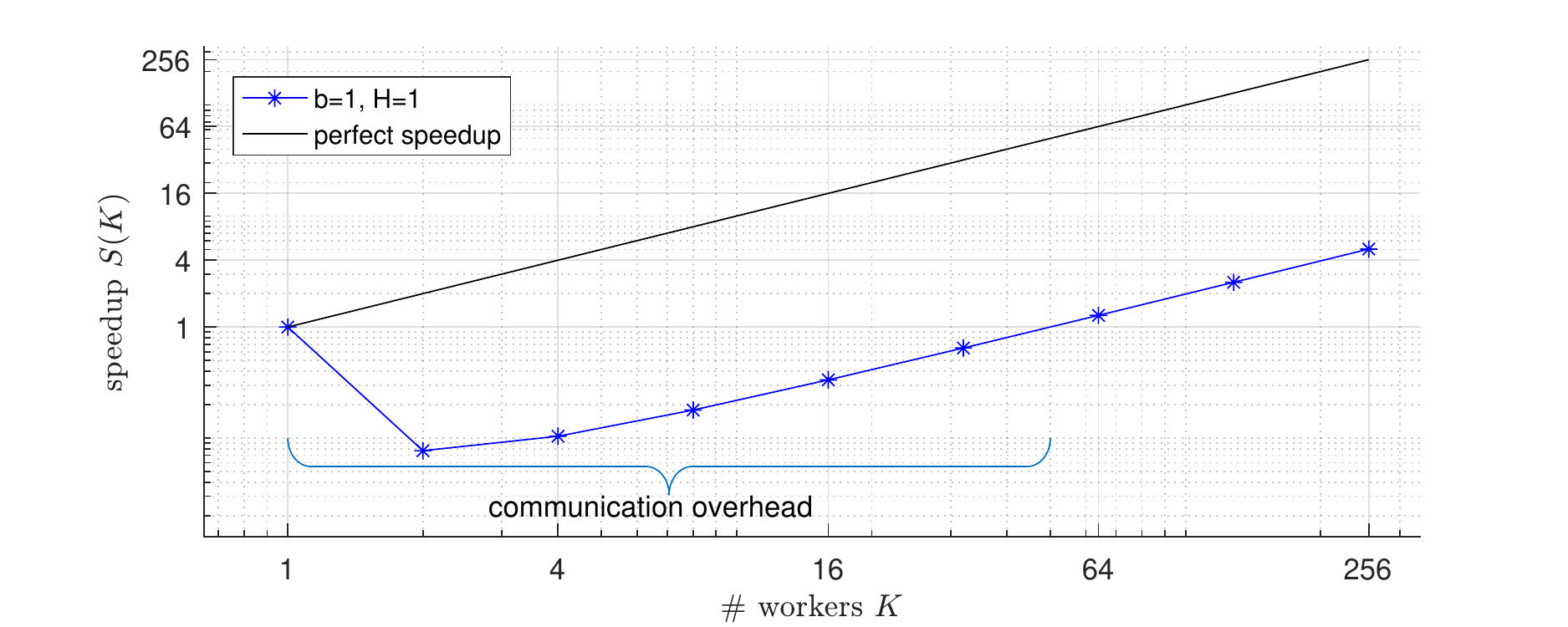}
      \hfill
        \includegraphics[width=0.48\textwidth]{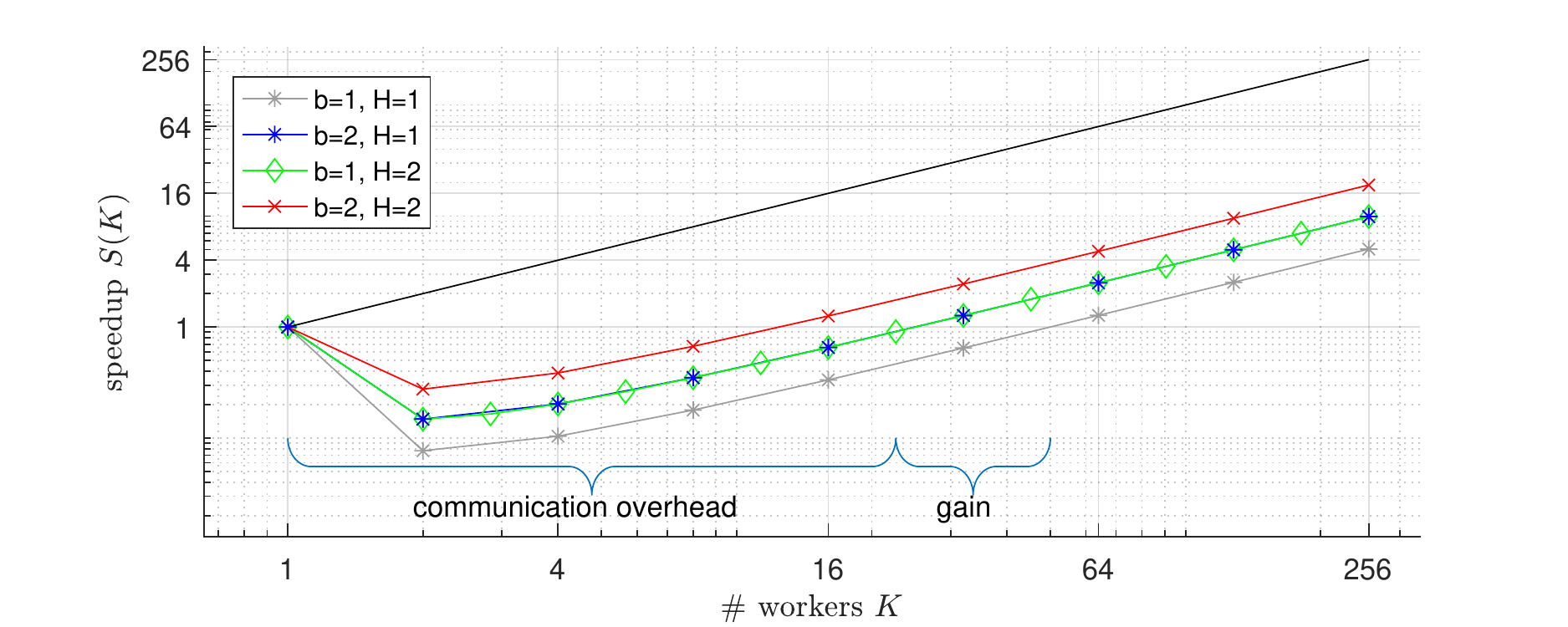}
    \caption{Illustration of the  speedup~\eqref{eq:result} for time-to-accuracy when either increasing mini-batch size $b$ $(1\to 2)$ or communication inverval $H$ $(1\to 2)$, for compute to communication ratio $\rho =25$. %
    }\label{fig:intro}
    \vspace{-.3cm}
\end{figure}

\subsection{Contributions}
We consider finite-sum convex optimization problems $f \colon \R^d \to \R$ of the form
\begin{align}
 f(\xx) &= \frac{1}{n} \sum_{i=1}^n f_i(\xx)\,, &
 \xx^* &:= \argmin_{\xx \in \R^d} f(\xx)\,, &
 f^\star &:= f(\xx^\star)\,,
\end{align}
where $f$ is $L$-smooth\footnote{$f(\yy) \leq f(\xx) + \lin{\nabla f(\xx),\yy-\xx}+\frac{L}{2}\norm{\yy-\xx}^2$, $\forall \xx,\yy \in \R^d$.} and $\mu$-strongly convex\footnote{$f(\yy) \geq f(\xx) + \lin{\nabla f(\xx),\yy-\xx}+\frac{\mu}{2}\norm{\yy-\xx}^2$, $\forall \xx,\yy \in \R^d$.}. We consider $K$ parallel mini-batch SGD sequences with mini-batch size $b$ that are synchronized (by averaging) after at most every $H$ iterations. For appropriate chosen stepsizes and an averaged iterate $\hat{\xx}_T$ after $T$ steps (for $T$ sufficiently large, see Section~\ref{sec:theory} below for the precise statement of the convergence result with bias and variance terms) and synchronization delay $H=O(\sqrt{T/(Kb)})$ we show convergence
\begin{align}
  \E f(\hat{\xx}_T) - f^\star = O \left(\frac{G^2}{\mu b K T} \right) \,, \label{eq:result}
\end{align}
with %
second moment bound $G^2 \geq \E \norm{\nabla f_i(\xx)}^2$. 
Thus, we see that compared to parallel mini-batch SGD the communication rounds can be reduced by a factor 
$H=O(\sqrt{T/(Kb)})$
without hampering the asymptotic convergence.
Equation~\eqref{eq:result} shows perfect linear speedup in terms of computation, but with much less communication that mini-batch SGD. The resulting speedup when taking communication cost into account is illustrated in Figure~\ref{fig:intro} (see also Section~\ref{sec:expdetails} below). Under the assumption that~\eqref{eq:result} is tight, one has thus now two strategies to improve the compute to communication ratio (denoted by $\rho$): (i)~either to increase the mini-batch size $b$ or (ii)~to increase the communication interval $H$. Both strategies give the same improvement when $b$ and $H$ are small (linear speedup). Like mini-batch SGD that faces some limitations for $b \ggg 1$ (as discussed in e.g.~\citep{Dekel2012:minibatch,Ma2018interpolation,Yin18a:diversity}), the parameter $H$ cannot be chosen too large in local SGD. We give some pratical guidelines in Section~\ref{sec:experiments}.

Our proof is simple and straightforward, and we imagine that---with slight modifications of the proof---the technique can also be used to analyze other variants of SGD that evolve sequences on different worker that are not perfectly synchronized. 
Although we do not yet provide convergence guarantees for the non-convex setting, we feel that the positive results presented here will spark further investigation of local SGD for this important application (see e.g.~\citep{Yu2018parallel}).

\subsection{Related Work}
A parallel line of work reduces the communication cost by compressing the stochastic gradients before communication. For instance, by limiting the number of bits in the floating point representation~\citep{Gupta:2015limited,Na2017:limitedprecision,Sa2015taming}, or random quantization~\citep{Alistarh2017:qsgd,Wen2017:terngrad}. The ZipML framework applies this technique also to the data~\citep{Zhang2017:zipml}. Sparsification methods reduce the number of non-zero entries in the stochastic gradient~\citep{Alistarh2017:qsgd,Wangni2017:sparsification}. 
A very aggressive---and promising---sparsification method is to keep only very few coordinates of the stochastic gradient by considering only the coordinates with the largest magnitudes~\citep{Seide2015:1bit,Strom2015:1bit,Dryden2016:topk,Aji2017:topk,Sun2017:sparse,Lin2018:deep,stich2018sparse}.

Allowing asynchronous updates provides an alternative solution to disguise the communication overhead to a certain amount~\citep{Niu2011:hogwild,Sa2015taming,Lian2015async}, though alternative strategies might be better when high accuracy is desired~\citep{Chen2016:revisiting}. The analysis of~\citet{Agarwal2011async} shows that asynchronous SGD on convex functions can tolerated delays up to $O(\sqrt{T/K})$, which is identical to the maximal length of the local sequences in local SGD. Asynchronous SGD converges also for larger delays (see also~\citep{Zhou2018unbounded}) but without linear speedup, a similar statement holds for local SGD (see discussion in Section~\ref{sec:theory}). The current frameworks for the analysis of asynchronous SGD do not cover local SGD. A fundamental difference is that asynchronous SGD maintains a (almost) synchronized sequence and gradients are computed with respect this unique sequence (but just applied with delays), whereas each worker in local SGD evolves a different sequence and computes gradient with respect those iterates.

For the training of deep neural networks,~\citet{Bijral:2016vs} discuss a stochastic averaging schedule whereas~\citet{Zhang2016:parallelLocalSGD} study local SGD with more frequent communication at the beginning of the optimization process. The elastic averaging technique~\citep{Zhang2015elastic} is different to local SGD, as it uses the average of the iterates only to guide the local sequences but does not perform a hard reset after averaging.
Among the first theoretical studies of local SGD in the non-convex setting are~\citep{Coppola2015:IPM,Zhou2018:Kaveraging} that did not establish a speedup, in contrast to two more recent analyses~\citep{Yu2018parallel,Wang2018cooperative}.
\citet{Yu2018parallel} show linear speedup of local SGD on non-convex functions for $H=O(T^{1/4}K^{-3/4})$, which is more restrictive than the constraint on $H$ in the convex setting.
\citet{Lin2018local} study empirically hierarchical variants of local SGD.

Local SGD with averaging in every step, i.e. $H=1$, is identical to mini-batch SGD.
\citet{Dekel2012:minibatch} %
show that batch sizes $b=T^\delta$, for $\delta \in (0,\tfrac{1}{2})$ are asymptotically optimal for mini-batch SGD, however they also note that this asymptotic bound might be crude for practical purposes. Similar considerations might also apply to the asymptotic upper bounds on the communication frequency $H$ derived here. 
Local SGD with averaging only at the end, i.e. $H=T$, is identical to one-shot SGD. \citet{Jain2018:leastsquares} give concise speedup results in terms of bias and variance for one-shot SGD with constant stepsizes for the optimization of quadratic least squares problems. In contrast, our upper bounds become loose when $H \to T$ and our results do not cover one-shot SGD.

Recently, \citet{Woodworth2018:graph} provided a lower bound for parallel stochastic optimization (in the convex setting, and not for strongly convex functions as considered here). The bound is not known to be tight for local SGD.

\subsection{Outline}
We formally introduce local SGD in Section~\ref{sec:algorithm} and sketch the convergence proof in Section~\ref{sec:theory}. In Section~\ref{sec:experiments} show numerical results to illustrate the result. We analyze asynchronous local SGD in Section~\ref{sec:async}. 
The proof of the technical results, further discussion about the experimental setup and implementation guidelines are deferred to the appendix.

\section{Local SGD}
\label{sec:algorithm}

The algorithm local SGD (depicted in Algorithm~\ref{alg:localsgd}) generates in parallel $K$ sequences $\{\xx_t^k\}_{t=0}^T$ of iterates, $k \in [K]$. Here $K$ denotes the level of parallelization, i.e. the number of distinct parallel sequences and $T$ the number of steps (i.e.\ the total number of stochastic gradient evaluations is $TK$). Let $\mathcal{I}_T \subseteq [T]$ with $T \in \mathcal{I}_T$ denote a set of \emph{synchronization indices}. Then local SGD evolves the sequences $\{\xx_t^k\}_{t=0}^T$ in the following way:
\begin{align}
 \xx_{t+1}^k := \begin{cases}
                    \xx_t^k - \eta_t \nabla f_{i_t^k}(\xx_t^k)\,, &\text{if } t+1 \notin \mathcal{I}_T\, \\
                    \frac{1}{K} \sum_{k=1}^K \bigl( \xx_t^k - \eta_t \nabla f_{i_t^k}(\xx_t^k)\bigr) &\text{if } t+1 \in \mathcal{I}_T\,
                \end{cases} \label{eq:localsgd}
\end{align} 
where indices $i_t^{k} \sim_{\rm u.a.r.} [n]$ and $\{\eta_t\}_{t \geq 0}$ denotes a sequence of stepsizes. If $\mathcal{I}_T =[T]$ then the synchronization of the sequences is performed every iteration. In this case,~\eqref{eq:localsgd} amounts to parallel or mini-batch SGD with mini-batch size $K$.\footnote{For the ease of presentation, we assume here that each worker in local SGD only processes a mini-batch of size $b=1$. This can be done without loss of generality, as we discuss later in Remark~\ref{rem:minilocal}.} On the other extreme, if $\mathcal{I}_T = \{T\}$, the synchronization only happens at the end, which is known as \emph{one-shot averaging}.

\algblock{ParFor}{EndParFor}
\algnewcommand\algorithmicparfor{\textbf{parallel\ for}}
\algnewcommand\algorithmicpardo{\textbf{do}}
\algnewcommand\algorithmicendparfor{\textbf{end\ parallel\ for}}
\algrenewtext{ParFor}[1]{\algorithmicparfor\ #1\ \algorithmicpardo}
\algrenewtext{EndParFor}{\algorithmicendparfor}

\begin{figure*}[t]
\begin{minipage}{\linewidth}
\vspace{\iclralgotopsep}
\begin{algorithm}[H]
\caption{\textsc{Local SGD}}\label{alg:localsgd}
\begin{algorithmic}[1]
\item Initialize variables $\xx_0^k = \xx_0$ for workers $k \in [K]$
 \For{$t$ \textbf{in} $0\dots T-1$}
  \ParFor{$k \in [K]$}
  \State Sample $i_t^k$ uniformly in $[n]$
  \If{$t+1 \in \mathcal{I}_T$}
  \State $\xx_{t+1}^k \gets \frac{1}{K} \sum_{k=1}^K \bigl( \xx_t^k - \eta_t \nabla f_{i_t^k}(\xx_t^k)\bigr)$ \Comment{global synchronization}
  \Else
  \State $\xx_{t+1}^k \gets \xx_{t}^k -  \eta_t \nabla f_{i_t^k}(\xx_t^k)$ \Comment{local update}
  \EndIf
  \EndParFor  
 \EndFor
\end{algorithmic}
\end{algorithm}
\end{minipage}%
\end{figure*}

In order to measure the longest interval between subsequent synchronization steps, we introduce the \emph{gap} of a set of integers.
\begin{definition}[gap]
The \emph{gap} of a set $\mathcal{\mathcal{P}}:=\{p_0,\dots, p_t\}$ of $t+1$ integers, $p_i \leq p_{i+1}$ for $i=0,\dots, t-1$, is defined as 
 $\operatorname{gap}(\mathcal{P}) := \max_{i=1,\dots,t} (p_{i}- p_{i-1})$.
\end{definition}

\subsection{Variance reduction in local SGD}
Before jumping to the convergence result, we first discuss an important observation.

\paragraph{Parallel (mini-batch) SGD.}
For carefully chosen stepsizes $\eta_t$, SGD converges at rate $\mathcal{O}\bigl(\frac{\sigma^2}{T}\bigr)$\footnote{For the ease of presentation, we here assume that the bias term is negligible compared to the variance term.} on strongly convex and smooth functions $f$, where $\sigma ^2 \geq \E \lVert \nabla f_{i_t^k}(\xx_t^k) - \nabla f(\xx_t^k)\rVert^2$ for $t>0, k \in [K]$ is an upper bound on the variance, see for instance~\citep{Zhao2015:importance}. By averaging $K$ stochastic gradients---such as in parallel SGD---the variance decreases  by a factor of $K$, and we conclude that parallel SGD converges at a rate $\mathcal{O}\bigl(\frac{\sigma^2}{TK} \bigr)$, i.e. achieves a linear speedup.

\paragraph{Towards local SGD.}
For local SGD such a simple argument is elusive. For instance, just capitalizing the convexity of the objective function $f$ is not enough: this will show that the averaged iterate of $K$ independent SGD sequences converges at rate $\mathcal{O}\bigl(\frac{\sigma^2}{T}\bigr)$, i.e. no speedup can be shown in this way. 

This indicates that one has to show that local SGD decreases the variance $\sigma^2$ instead, similar as in parallel SGD. Suppose the different sequences $\xx_t^k$ evolve \emph{close} to each other.
Then %
it is reasonable to assume that averaging the stochastic gradients $\nabla f_{i_t^k}(\xx_t^k)$ for all $k \in [K]$ can still yield a reduction in the variance by a factor of $K$---similar as in parallel SGD. Indeed, we will make this statement precise in the proof below.

\subsection{Convergence Result and Discussion}
\label{sec:results}
\begin{theorem}
\label{thm:main}
Let $f$ be $L$-smooth and $\mu$-strongly convex, $\E_i \norm{\nabla f_i(\xx_t^k)-\nabla f(\xx_t^k)}^2 \leq \sigma^2$,  $\E_i \norm{\nabla f_i(\xx_t^k)}^2 \leq G^2$, for $t=0,\dots, T-1$, where $\{\xx_t^k\}_{t= 0}^T$ for $k \in [K]$ are generated according to~\eqref{eq:localsgd} with  $\operatorname{gap}(\mathcal{I}_T) \leq H$ and for stepsizes $\eta_t = \frac{4}{\mu(a + t)}$ with shift parameter $a > \max\{16 \kappa, H \}$, for $\kappa = \frac{L}{\mu}$. Then \vspace{-0.3cm}
\begin{align}
\E f(\hat{\xx}_T) - f^\star \leq \frac{\mu a^3}{2  S_T} \norm{\xx_0 - \xx^\star}^2 + \frac{4T(T+2a)}{\mu K S_T} \sigma^2 + \frac{256 T}{\mu^2   S_T}G^2 H^2L\,, \label{eq:main}
\end{align}
where 
$\hat{\xx}_T = \frac{1}{K S_T} \sum_{k=1}^K \sum_{t=0}^{T-1} w_t \xx_t^k$, for $w_t = (a+t)^2$ and $S_T = \sum_{t=0}^{T-1} w_t \geq \frac{1}{3} T^3$.
\end{theorem}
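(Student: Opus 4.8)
The plan is to track a single \emph{virtual averaged sequence} $\bar{\xx}_t := \frac{1}{K}\sum_{k=1}^K \xx_t^k$ and to show it behaves like ordinary SGD with a reduced-variance gradient, up to a perturbation controlled by how far the local iterates drift apart between synchronizations. The first observation I would record is that, because averaging is linear and a synchronization step merely replaces every $\xx_{t+1}^k$ by the common average, the sequence $\bar{\xx}_t$ obeys the \emph{same} update in both branches of~\eqref{eq:localsgd}, namely $\bar{\xx}_{t+1} = \bar{\xx}_t - \eta_t \gg_t$ with $\gg_t := \frac{1}{K}\sum_k \nabla f_{i_t^k}(\xx_t^k)$. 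Writing $\bar{\gg}_t := \frac{1}{K}\sum_k \nabla f(\xx_t^k)$ for its conditional expectation, I would expand $\E\norm{\bar{\xx}_{t+1}-\xx^\star}^2$ in the standard way, so that everything reduces to lower bounding the inner product $\lin{\bar{\gg}_t, \bar{\xx}_t - \xx^\star}$ and upper bounding the second moment $\E\norm{\gg_t}^2$.

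Two effects enter here. For the second moment I would split $\gg_t$ into its mean $\bar{\gg}_t$ and the zero-mean noise; since the samples $i_t^k$ are independent across the $K$ workers, the noise contributions are uncorrelated and their averaged variance is $\frac{1}{K^2}\sum_k \E\norm{\nabla f_{i_t^k}(\xx_t^k)-\nabla f(\xx_t^k)}^2 \le \frac{\sigma^2}{K}$. This is exactly the variance reduction by a factor $K$ anticipated in Section~\ref{sec:results}, and it is the conceptual heart of the speedup. For the inner product I would, for each worker, insert $\xx_t^k$ via $\lin{\nabla f(\xx_t^k),\bar{\xx}_t-\xx^\star} = \lin{\nabla f(\xx_t^k),\bar{\xx}_t-\xx_t^k} + \lin{\nabla f(\xx_t^k),\xx_t^k-\xx^\star}$, bounding the second summand from below by strong convexity and the first by $L$-smoothness. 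Averaging over $k$ and using $\frac{1}{K}\sum_k\norm{\xx_t^k-\xx^\star}^2 \ge \norm{\bar{\xx}_t-\xx^\star}^2$ (Jensen) yields a one-step recursion of the form $\E\norm{\bar{\xx}_{t+1}-\xx^\star}^2 \le (1-\mu\eta_t)\E\norm{\bar{\xx}_t-\xx^\star}^2 - \tfrac{\eta_t}{2}\,\E(f(\bar{\xx}_t)-f^\star) + \frac{\eta_t^2\sigma^2}{K} + \eta_t L\cdot\frac{1}{K}\sum_k\E\norm{\bar{\xx}_t-\xx_t^k}^2$, where the stepsize restriction coming from $a>16\kappa$ (i.e.\ $\eta_t \le 1/(4L)$ up to constants) is what lets me absorb the leftover $\norm{\bar{\gg}_t}^2$ contribution into the negative $f(\bar{\xx}_t)-f^\star$ term.

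The remaining, and in my view main, obstacle is to control the \emph{consensus error} $\frac{1}{K}\sum_k\E\norm{\bar{\xx}_t-\xx_t^k}^2$. Here I would exploit that between two synchronization indices at most $H$ local steps occur: letting $t_0 \le t$ be the last synchronization time, all sequences agree at $t_0$, so $\xx_t^k-\bar{\xx}_t = -\sum_{s=t_0}^{t-1}\eta_s\bigl(\nabla f_{i_s^k}(\xx_s^k)-\gg_s\bigr)$, and since the empirical variance of the $\xx_t^k$ about their mean is at most their second moment about the fixed point $\xx_{t_0}$, I obtain $\frac{1}{K}\sum_k\norm{\bar{\xx}_t-\xx_t^k}^2 \le \frac{1}{K}\sum_k\norm{\sum_{s=t_0}^{t-1}\eta_s\nabla f_{i_s^k}(\xx_s^k)}^2$. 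Applying Cauchy--Schwarz over the at most $H$ summands together with the uniform bound $G^2$ then gives a bound of order $H^2\eta_t^2 G^2$ (using that the stepsizes are essentially constant across a window of length $H$, which $a>H$ guarantees). Substituting this back turns the last term of the recursion into a contribution of order $\eta_t^3 L H^2 G^2$.

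Finally I would unroll the recursion by a weighted-telescoping argument tailored to $\eta_t=\frac{4}{\mu(a+t)}$ and $w_t=(a+t)^2$. Writing $e_t:=\E\norm{\bar{\xx}_t-\xx^\star}^2$ and multiplying the recursion by $w_t/\eta_t$, the coefficient $\frac{w_{t+1}(1-\mu\eta_{t+1})}{\eta_{t+1}}$ of $e_{t+1}$ is dominated by the coefficient $\frac{w_t}{\eta_t}$ produced by step $t$, which reduces to the elementary inequality $(u+1)^2(u-3)\le u^3$ with $u=a+t$; hence the distance terms telescope and only the initial term $\frac{\mu a^3}{2}\norm{\xx_0-\xx^\star}^2$ survives. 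The two error sums evaluate to $\sum_t w_t\eta_t = O(T(T+2a)/\mu)$ and $\sum_t w_t\eta_t^2 = O(T/\mu^2)$, which after division by $S_T\ge\frac{1}{3} T^3$ reproduce precisely the variance and deviation terms of~\eqref{eq:main}. Convexity of $f$ applied to $\hat{\xx}_T=\frac{1}{S_T}\sum_t w_t\bar{\xx}_t$ gives $\E f(\hat{\xx}_T)-f^\star \le \frac{1}{S_T}\sum_t w_t\,\E(f(\bar{\xx}_t)-f^\star)$, which closes the argument.
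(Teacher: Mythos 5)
Your proposal is correct and follows essentially the same route as the paper's proof: the virtual averaged sequence $\bar{\xx}_t$ with the perturbed-iterate recursion (Lemma~\ref{lemma:perturbed}), variance reduction by independence across workers (Lemma~\ref{lem:barsigma}), the consensus bound via the last synchronization point, Cauchy--Schwarz, and the stepsize-ratio condition guaranteed by $a > H$ (Lemma~\ref{lem:deviation}), and finally the quadratic-weight telescoping with $w_t=(a+t)^2$ (Lemma~\ref{lem:averaging}, which the paper imports from~\citep{stich2018sparse} and you re-derive inline via the inequality $(u+1)^2(u-3)\le u^3$). The only deviations are immaterial constants, e.g.\ your coefficient $\eta_t L$ versus the paper's $2\eta_t L$ on the consensus term.
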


We were not especially careful to optimize the constants (and the lower order terms) in~\eqref{eq:main}, so we now state the asymptotic result. 
\begin{corollary}\label{cor:asympt}
Let $\hat{\xx}_T$ be as defined as in Theorem~\ref{thm:main}, for parameter $a=\max\{16 \kappa, H \}$. Then
\begin{align}
 \E f(\hat{\xx}_T) - f^\star  %
  &=  O \left(\frac{1}{\mu K T} + \frac{\kappa + H}{\mu K T^2}\right)\sigma^2  + O \left(\frac{\kappa H^2}{\mu T^2} + \frac{\kappa^3 + H^3}{\mu T^3} \right)G^2\,. \label{eq:asympt}
\end{align}
\end{corollary}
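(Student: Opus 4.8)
The plan is to derive \eqref{eq:asympt} directly from the nonasymptotic bound \eqref{eq:main} by inserting the prescribed shift $a = \max\{16\kappa, H\}$, replacing $S_T$ by its stated lower bound $S_T \ge \frac13 T^3$ throughout the denominators, and simplifying the three summands of \eqref{eq:main} one at a time. Almost everything is bookkeeping; the only genuine step is that \eqref{eq:asympt} carries no initialization-distance term, so the bias contribution $\frac{\mu a^3}{2 S_T}\norm{\xx_0 - \xx^\star}^2$ must be absorbed into the $G^2$ bracket.

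First I would record the two order-of-magnitude facts implied by $a = \max\{16\kappa, H\}$: since $\max\{x,y\} \le x+y \le 2\max\{x,y\}$ for nonnegative $x, y$, one has $a = O(\kappa + H)$ and $a^3 = O(\kappa^3 + H^3)$. The variance and $H^2$-terms are then immediate. Using $S_T \ge \frac13 T^3$, the variance term of \eqref{eq:main} becomes
\begin{align*}
\frac{4T(T+2a)}{\mu K S_T}\sigma^2 \le \frac{12(T + 2a)}{\mu K T^2}\sigma^2 = O\left(\frac{1}{\mu K T} + \frac{\kappa + H}{\mu K T^2}\right)\sigma^2,
\end{align*}
which is exactly the $\sigma^2$ bracket of \eqref{eq:asympt}. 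For the last term of \eqref{eq:main} I would use $L = \kappa \mu$, so that $L/\mu^2 = \kappa/\mu$, giving
\begin{align*}
\frac{256 T}{\mu^2 S_T} G^2 H^2 L \le \frac{768\, \kappa}{\mu T^2} H^2 G^2 = O\left(\frac{\kappa H^2}{\mu T^2}\right)G^2.
\end{align*}

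The step I expect to require the most care is the bias term, precisely because \eqref{eq:asympt} is phrased only in terms of $\sigma^2$ and $G^2$. The idea is to bound the initialization distance by the gradient bound: since $\xx_0 = \xx_0^k$ is one of the iterates covered by the hypotheses, Jensen's inequality gives $\norm{\nabla f(\xx_0)}^2 = \norm{\E_i \nabla f_i(\xx_0)}^2 \le \E_i \norm{\nabla f_i(\xx_0)}^2 \le G^2$, and $\mu$-strong convexity together with $\nabla f(\xx^\star) = 0$ yields $\norm{\nabla f(\xx_0)} \ge \mu \norm{\xx_0 - \xx^\star}$, so that $\norm{\xx_0 - \xx^\star}^2 \le G^2/\mu^2$. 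Substituting this and $a^3 = O(\kappa^3 + H^3)$ into the bias term, and again using $S_T \ge \frac13 T^3$, gives
\begin{align*}
\frac{\mu a^3}{2 S_T}\norm{\xx_0 - \xx^\star}^2 \le \frac{3 a^3}{2\mu T^3} G^2 = O\left(\frac{\kappa^3 + H^3}{\mu T^3}\right)G^2.
\end{align*}
Combining this with the previously obtained $\kappa H^2$-term reproduces the full $G^2$ bracket $O\!\left(\frac{\kappa H^2}{\mu T^2} + \frac{\kappa^3 + H^3}{\mu T^3}\right)G^2$, and together with the $\sigma^2$ bracket this is precisely \eqref{eq:asympt}, completing the derivation.
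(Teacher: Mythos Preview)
The proposal is correct and follows essentially the same route as the paper: substitute $a=\max\{16\kappa,H\}$, replace $S_T$ by $\tfrac13 T^3$, and absorb the bias term into the $G^2$ bracket via the inequality $\mu\norm{\xx_0-\xx^\star}\le G$, which the paper simply cites from \cite[Lemma~2]{Rakhlin2012suffix} whereas you rederive it directly from strong convexity and Jensen's inequality. Your write-up is in fact more explicit than the paper's one-line justification.
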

For the last estimate we used $\E \mu \norm{\xx_0 - \xx^\star} \leq 2 G$ for $\mu$-strongly convex $f$, as derived in~\cite[Lemma 2]{Rakhlin2012suffix}.

\begin{remark}[Mini-batch local SGD]
\label{rem:minilocal}
So far, we assumed that each worker only computes a single stochastic gradient. In mini-batch local SGD, each worker computes a mini-batch of size $b$ in each iteration. This reduces the variance by a factor of $b$, and thus Theorem~\eqref{thm:main} gives the convergence rate of mini-batch local SGD when $\sigma^2$ is replaced by $\frac{\sigma^2}{b}$.
\end{remark}

We now state some consequences of equation~\eqref{eq:asympt}. For the ease of the exposition we omit the dependency on $L$, $\mu$, $\sigma^2$ and $G^2$ below, but depict the dependency on the local mini-batch size $b$.
\begin{description}[leftmargin=0.5cm]
\item[Convergence rate.] For $T$ large enough and assuming $\sigma > 0$, the very first term is dominating in~\eqref{eq:asympt} and local SGD converges at rate $O(1/(KTb))$. That is, local SGD achieves a linear speedup in both, the number of workers $K$ and the mini-batch size $b$.

\item[Global synchronization steps.] It needs to hold $H=O(\sqrt{T/(Kb)})$ to get the linear speedup.  This yields a reduction of the number of communication rounds by a factor $O(\sqrt{T/(Kb)})$ compared to parallel mini-batch SGD without hurting the convergence rate.

\item[Extreme Cases.] We have not optimized the result for extreme settings of $H$, $K$, $L$ or $\sigma$. For instance, we do not recover convergence for the one-shot averaging, i.e. the setting $H=T$ (though convergence for $H=o(T)$, but at a lower rate). 

\item[Unknown Time Horizon/Adaptive Communication Frequency]
\citet{Zhang2016:parallelLocalSGD} empirically observe that more frequent communication at the beginning of the optimization can help to get faster time-to-accuracy (see also~\cite{Lin2018local}). Indeed, when the number of total iterations $T$ is not known beforehand (as it e.g. depends on the target accuracy, cf.~\eqref{eq:asympt} and also Section~\ref{sec:experiments} below), then increasing the communication frequency seems to be a good strategy to keep the communication low, why still respecting the constraint $H=O(\sqrt{T/(Kb)})$ for all $T$.

\end{description}

\section{Proof Outline}
\label{sec:theory}
We now give the outline of the proof. The proofs of the lemmas are given in Appendix~\ref{sec:proofsdetail}.

\paragraph{Perturbed iterate analysis.} Inspired by the perturbed iterate framework of~\citep{Mania2017:perturbed} %
we first define a virtual sequence $\{\bar{\xx}_t\}_{t \geq 0}$ in the following way:
\begin{align}
 \bar{\xx}_0 &= \xx_0\,, &
 \bar{\xx}_{t} = \frac{1}{K}\sum_{k=1}^K \xx_{t}^k\,, \label{eq:bar}
\end{align}
where the sequences $\{\xx_t^k\}_{t \geq 0}$ for $k\in [K]$ are the same as in~\eqref{eq:localsgd}. Notice that this sequence never has to be computed explicitly, it is just a tool that we use in the analysis. Further notice that $\bar{\xx}_t = \xx_{t}^k$ for $k \in [K]$ whenever $t \in \mathcal{I}_T$. Especially, when $\mathcal{I}_T = [T]$, then $\bar{\xx}_t \equiv \xx_{t}^k$ for every $k\in [K], t\in [T]$. 
It will be useful to define
\begin{align}
 \gg_t &:= \frac{1}{K}  \sum_{k=1}^K \nabla f_{i_t^k}(\xx_t^k)\,, & 
  \bar{\gg}_t &:= \frac{1}{K} \sum_{k=1}^K  \nabla f (\xx_t^k)\,. %
\end{align}
Observe $\bar{\xx}_{t+1} = \bar{\xx}_t - \eta_t \gg_t$ and $\E \gg_t = \bar{\gg}_t$.

Now the proof proceeds as follows: we show (i) that the virtual sequence $\{\bar{\xx}_t\}_{t\geq 0}$ almost behaves like mini-batch SGD with batch size $K$ (Lemma~\ref{lemma:perturbed} and~\ref{lem:barsigma}), and (ii) the true iterates $\{\xx_t^k\}_{t\geq 0, k \in [K]}$ do not deviate much from the virtual sequence (Lemma~\ref{lem:deviation}). These are the main ingredients in the proof. To obtain the rate we exploit a technical lemma from~\citep{stich2018sparse}.

\begin{lemma}\label{lemma:perturbed}
Let $\{\xx_t\}_{t \geq 0}$ and $\{\bar{\xx}_t\}_{t \geq 0}$ for $k \in [K]$ be defined as in~\eqref{eq:localsgd} and~\eqref{eq:bar} and let $f$ be $L$-smooth and $\mu$-strongly convex and $\eta_t \leq \frac{1}{4L}$. Then
\begin{align}
\begin{split}
\E \norm{\bar{\xx}_{t+1}-\xx^\star}^2 &\leq (1-\mu \eta_t) \E \norm{\bar{\xx}_t-\xx^\star}^2  + \eta_t^2 \E \norm{\gg_t - \bar{\gg}_t}^2 \\
&\qquad - \frac{1}{2} \eta_t \E (f(\bar{\xx}_t)- f^\star) 
+ 2 \eta_t \frac{L}{K} \sum_{k=1}^K \E \norm{\bar{\xx}_t - \xx_t^k}^2\,.
\label{eq:perturbed}
\end{split}
\end{align}
\end{lemma}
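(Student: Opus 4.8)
The plan is to start from the update rule for the virtual sequence, $\bar{\xx}_{t+1} = \bar{\xx}_t - \eta_t \gg_t$ with $\E \gg_t = \bar{\gg}_t$, and expand the squared distance to the optimum:
\begin{align}
\norm{\bar{\xx}_{t+1}-\xx^\star}^2 = \norm{\bar{\xx}_t - \xx^\star}^2 - 2\eta_t \lin{\gg_t, \bar{\xx}_t - \xx^\star} + \eta_t^2 \norm{\gg_t}^2\,. \nonumber
\end{align}
First I would take the expectation and handle the middle term by using $\E \gg_t = \bar{\gg}_t$, so that $\E \lin{\gg_t, \bar{\xx}_t - \xx^\star} = \lin{\bar{\gg}_t, \bar{\xx}_t - \xx^\star}$. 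To control the last term, I would add and subtract $\bar{\gg}_t$, writing $\norm{\gg_t}^2 = \norm{\gg_t - \bar{\gg}_t}^2 + 2\lin{\gg_t - \bar{\gg}_t, \bar{\gg}_t} + \norm{\bar{\gg}_t}^2$; upon taking expectations the cross term vanishes since $\E[\gg_t - \bar{\gg}_t] = 0$, leaving $\E \norm{\gg_t}^2 = \E\norm{\gg_t - \bar{\gg}_t}^2 + \E \norm{\bar{\gg}_t}^2$. This isolates the variance term $\eta_t^2 \E \norm{\gg_t - \bar{\gg}_t}^2$ that appears in the claim.

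The core of the argument is then to bound the remaining deterministic-looking quantity $-2\eta_t \lin{\bar{\gg}_t, \bar{\xx}_t - \xx^\star} + \eta_t^2 \norm{\bar{\gg}_t}^2$, recalling $\bar{\gg}_t = \frac{1}{K}\sum_k \nabla f(\xx_t^k)$ is an average of \emph{full} gradients evaluated at the \emph{individual} iterates $\xx_t^k$, not at $\bar{\xx}_t$. I would split the inner product across the $K$ workers and, for each $k$, use strong convexity and smoothness relative to the point $\xx_t^k$ where the gradient is actually taken. The natural inequalities are: from $\mu$-strong convexity, $\lin{\nabla f(\xx_t^k), \xx_t^k - \xx^\star} \geq f(\xx_t^k) - f^\star + \frac{\mu}{2}\norm{\xx_t^k - \xx^\star}^2$; and to handle the mismatch between $\xx_t^k$ and $\bar{\xx}_t$ in the inner product I would write $\lin{\nabla f(\xx_t^k), \bar{\xx}_t - \xx^\star} = \lin{\nabla f(\xx_t^k), \xx_t^k - \xx^\star} + \lin{\nabla f(\xx_t^k), \bar{\xx}_t - \xx_t^k}$ and bound the discrepancy term via Young's/Cauchy--Schwarz, trading it against a factor of $\norm{\bar{\xx}_t - \xx_t^k}^2$ which is exactly the deviation quantity in the last term of~\eqref{eq:perturbed}. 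For the quadratic term, I would use $\norm{\bar{\gg}_t}^2 \leq \frac{1}{K}\sum_k \norm{\nabla f(\xx_t^k)}^2$ by convexity of the norm, and then bound each $\norm{\nabla f(\xx_t^k)}^2 \leq 2L(f(\xx_t^k)-f^\star)$ using $L$-smoothness of each summand (the standard consequence $\norm{\nabla f(\yy)}^2 \leq 2L(f(\yy)-f^\star)$ for smooth convex $f$).

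After assembling these pieces, the terms of the form $-\eta_t(f(\xx_t^k)-f^\star) + \eta_t^2 L (f(\xx_t^k)-f^\star)$ should combine, and using the stepsize restriction $\eta_t \leq \frac{1}{4L}$ the coefficient becomes negative and can be bounded by $-\frac{1}{2}\eta_t(f(\xx_t^k)-f^\star)$ per worker. The remaining step is to convert the averaged suboptimality $\frac{1}{K}\sum_k (f(\xx_t^k)-f^\star)$ into $f(\bar{\xx}_t)-f^\star$ as required by the statement; since $f$ is convex, Jensen's inequality gives $f(\bar{\xx}_t) \leq \frac{1}{K}\sum_k f(\xx_t^k)$, which points the inequality in the favorable direction for a term carrying a negative sign. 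The main obstacle I anticipate is the careful bookkeeping of the constants in the Young's inequality split: the factor multiplying $\frac{L}{K}\sum_k \norm{\bar{\xx}_t - \xx_t^k}^2$ must land at exactly $2\eta_t$ and the descent coefficient at exactly $\frac{1}{2}\eta_t$, which requires choosing the Young's parameter consistently with the $\eta_t \leq \frac{1}{4L}$ budget and absorbing several $O(\eta_t^2 L)$ contributions into the $O(\eta_t)$ descent term without overspending. Getting strong convexity to yield the clean contraction factor $(1-\mu\eta_t)$ rather than a weaker $(1-\frac{\mu\eta_t}{2})$ is the delicate part, and I expect it hinges on the strong-convexity term $\frac{\mu}{2}\norm{\xx_t^k-\xx^\star}^2$ being tied back to $\norm{\bar{\xx}_t-\xx^\star}^2$ (again via Jensen on the average) with no loss.
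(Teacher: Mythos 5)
Your proposal is correct and follows essentially the same route as the paper's proof: per-worker splitting of the inner product, strong convexity at $\xx_t^k$, the smoothness bound $\norm{\nabla f(\xx_t^k)}^2 \leq 2L(f(\xx_t^k)-f^\star)$, Young's inequality to trade the mismatch term against $\norm{\bar{\xx}_t - \xx_t^k}^2$, and Jensen to recover both $f(\bar{\xx}_t)-f^\star$ and the clean $(1-\mu\eta_t)$ contraction. The only cosmetic difference is how you isolate the variance term (direct expansion plus the bias--variance identity $\E\norm{\gg_t}^2 = \E\norm{\gg_t-\bar{\gg}_t}^2 + \E\norm{\bar{\gg}_t}^2$, rather than the paper's add-and-subtract of $\eta_t\bar{\gg}_t$ inside the square), which is algebraically equivalent.
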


\paragraph{Bounding the variance.}
From equation~\eqref{eq:perturbed} it becomes clear that we should derive an upper bound on $\E \norm{\gg_t - \bar{\gg}_t}^2$. We will relate this to the variance $\sigma^2$.

\begin{lemma}
\label{lem:barsigma}
Let $\sigma^2 \geq \E_i \lVert \nabla f_{i}(\xx_t^k) - \nabla f(\xx_t^k)\rVert^2$ for $k\in [K]$, $t \in [T]$. Then $\E \norm{\gg_t - \bar{\gg}_t}^2 \leq \frac{\sigma^2}{K}$.
\end{lemma}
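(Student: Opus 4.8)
The plan is to compute the variance $\E \norm{\gg_t - \bar{\gg}_t}^2$ directly by exploiting the independence of the sampling indices $i_t^k$ across the workers $k \in [K]$. Writing $\gg_t = \frac{1}{K}\sum_{k=1}^K \nabla f_{i_t^k}(\xx_t^k)$ and $\bar{\gg}_t = \frac{1}{K}\sum_{k=1}^K \nabla f(\xx_t^k)$, I would first observe that
\begin{align*}
\gg_t - \bar{\gg}_t = \frac{1}{K}\sum_{k=1}^K \bigl( \nabla f_{i_t^k}(\xx_t^k) - \nabla f(\xx_t^k)\bigr)\,,
\end{align*}
so the quantity of interest is the squared norm of an average of $K$ centered random vectors. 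Define $\zz_k := \nabla f_{i_t^k}(\xx_t^k) - \nabla f(\xx_t^k)$. The crucial point is that, conditioned on the iterates $\{\xx_t^k\}_{k\in[K]}$ (which depend only on the sampling history strictly before step $t$), each $\zz_k$ has conditional expectation zero, since $\E_{i} \nabla f_i(\xx_t^k) = \nabla f(\xx_t^k)$, and the indices $i_t^1,\dots,i_t^K$ are drawn independently across workers.

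First I would condition on $\{\xx_t^k\}_{k\in[K]}$ and expand
\begin{align*}
\E \Bignorm{\frac{1}{K}\sum_{k=1}^K \zz_k}^2 = \frac{1}{K^2}\sum_{k=1}^K \E\norm{\zz_k}^2 + \frac{1}{K^2}\sum_{k \neq l} \E \lin{\zz_k,\zz_l}\,.
\end{align*}
The cross terms vanish: for $k \neq l$, independence of $i_t^k$ and $i_t^l$ (given the iterates) together with $\E \zz_k = \E \zz_l = \*0$ gives $\E \lin{\zz_k,\zz_l} = \lin{\E\zz_k,\E\zz_l} = 0$. This is exactly the same variance-reduction mechanism that makes parallel mini-batch SGD reduce variance by a factor $K$, as discussed in the paragraph on parallel SGD. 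For each diagonal term, the variance assumption gives $\E\norm{\zz_k}^2 = \E_i \norm{\nabla f_i(\xx_t^k) - \nabla f(\xx_t^k)}^2 \leq \sigma^2$. Summing the $K$ diagonal terms yields $\frac{1}{K^2}\cdot K\sigma^2 = \frac{\sigma^2}{K}$, and taking the outer expectation over the iterates preserves the bound by the tower property.

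The proof is essentially a one-line second-moment computation, so I do not anticipate a serious obstacle; the only point requiring care is the conditioning. I would make sure to state explicitly that the randomness in $i_t^k$ at step $t$ is independent of the iterates $\xx_t^k$ (which are measurable with respect to the sampling done before step $t$), so that both the unbiasedness $\E[\zz_k \mid \{\xx_t^k\}] = \*0$ and the cross-worker independence hold conditionally. With that in place, the vanishing of the cross terms and the diagonal bound follow immediately, establishing $\E \norm{\gg_t - \bar{\gg}_t}^2 \leq \frac{\sigma^2}{K}$ as claimed.
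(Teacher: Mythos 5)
Your proposal is correct and follows essentially the same route as the paper: the paper likewise expands $\gg_t - \bar{\gg}_t$ as the average of the centered terms $\nabla f_{i_t^k}(\xx_t^k) - \nabla f(\xx_t^k)$ and invokes $\operatorname{Var}\bigl(\sum_{k=1}^K X_k\bigr) = \sum_{k=1}^K \operatorname{Var}(X_k)$ for independent variables, which is exactly your vanishing-cross-terms computation. Your version merely spells out the conditioning on the iterates and the tower property, which the paper leaves implicit.
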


\paragraph{Bounding the deviation.}
Further, we need to bound $\frac{1}{K} \sum_{k=1}^K \E \norm{\bar{\xx}_t - \xx_t^k}^2$. For this we impose a condition on $\mathcal{I}_T$ and an additional condition on the stepsize $\eta_t$.
\begin{lemma}\label{lem:deviation}
If $\operatorname{gap}(\mathcal{I}_T) \leq H$ and 
sequence of decreasing positive stepsizes $\{\eta_t\}_{t \geq 0}$ satisfying $\eta_t \leq 2\eta_{t+H}$ for all $t\geq 0$, 
then
\begin{align}
\frac{1}{K} \sum_{k=1}^K \E \norm{\bar{\xx}_t - \xx_t^k}^2 \leq 4 \eta_t^2 G^2 H^2\,, \label{eq:deviation}
\end{align}
where $G^2$ is a constant such that  $\E_i \lVert \nabla f_{i}(\xx_t^k)\rVert^2 \leq G^2$ for $k \in [K], t \in [T]$.
\end{lemma}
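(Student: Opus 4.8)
The plan is to exploit the gap condition, which limits how long the local sequences have drifted apart since their last reset. Fix an index $t$ and let $t_0 \le t$ be the most recent synchronization index (taking $t_0 = 0$ if none has occurred yet, which is consistent since $\bar{\xx}_0 = \xx_0 = \xx_0^k$ for all $k$). At $t_0$ all sequences coincide, $\xx_{t_0}^k = \bar{\xx}_{t_0}$, and since $\operatorname{gap}(\mathcal{I}_T) \le H$ the next synchronization index is at most $t_0 + H$; as it lies after $t$, the number of elapsed local steps satisfies $t - t_0 \le H$. Between $t_0$ and $t$ only local updates are applied, so $\xx_t^k = \bar{\xx}_{t_0} - \sum_{s=t_0}^{t-1} \eta_s \nabla f_{i_s^k}(\xx_s^k)$.

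First I would remove the coupling introduced by $\bar{\xx}_t = \frac{1}{K}\sum_k \xx_t^k$. Since the mean minimizes the average squared distance to a collection of points (the standard bias–variance decomposition), for the fixed anchor $\bar{\xx}_{t_0}$ one has $\frac{1}{K}\sum_k \norm{\xx_t^k - \bar{\xx}_t}^2 \le \frac{1}{K}\sum_k \norm{\xx_t^k - \bar{\xx}_{t_0}}^2$. This replaces the deviation from the running average $\bar{\xx}_t$ by the deviation from a common fixed point $\bar{\xx}_{t_0}$, so the $K$ sequences decouple and no cross terms need to be tracked.

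Next I substitute the local-update expression for $\xx_t^k - \bar{\xx}_{t_0}$ and bound the squared norm of a sum of at most $H$ terms. By Jensen / Cauchy–Schwarz, $\norm{\sum_{s=t_0}^{t-1} \eta_s \nabla f_{i_s^k}(\xx_s^k)}^2 \le (t-t_0) \sum_{s=t_0}^{t-1} \eta_s^2 \norm{\nabla f_{i_s^k}(\xx_s^k)}^2 \le H \sum_{s=t_0}^{t-1} \eta_s^2 \norm{\nabla f_{i_s^k}(\xx_s^k)}^2$. Taking expectations and applying the second-moment bound $\E_i \norm{\nabla f_i(\xx_s^k)}^2 \le G^2$ term by term yields $\E \frac{1}{K}\sum_k \norm{\xx_t^k - \bar{\xx}_{t_0}}^2 \le H G^2 \sum_{s=t_0}^{t-1} \eta_s^2$.

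It remains to control $\sum_{s=t_0}^{t-1} \eta_s^2$, and this is where the stepsize hypothesis enters. For every $s$ in the range we have $s \ge t_0 \ge t - H$, so monotonicity of $\{\eta_t\}$ gives $\eta_s \le \eta_{t-H}$, and the assumption $\eta_t \le 2\eta_{t+H}$ (applied at index $t-H$) gives $\eta_{t-H} \le 2\eta_t$; hence $\eta_s^2 \le 4\eta_t^2$ for each of the at most $H$ summands. Combining, $H G^2 \sum_{s=t_0}^{t-1} \eta_s^2 \le H G^2 \cdot H \cdot 4\eta_t^2 = 4\eta_t^2 G^2 H^2$, which is the claim. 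The only delicate point is the bookkeeping of the two factors of $H$—one from the number of elapsed steps in the Cauchy–Schwarz step and one from counting the summands—together with checking that the stepsize assumption is precisely what converts each $\eta_s$ into $\eta_t$ up to the constant $2$; the remaining manipulations are routine.
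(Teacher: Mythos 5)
Your proof is correct and follows essentially the same route as the paper's: anchor at the last synchronization index $t_0$ with $t-t_0\le H$, drop the deviation from $\bar{\xx}_t$ to the deviation from the fixed point $\bar{\xx}_{t_0}$ via the mean-minimizes-squared-distance (bias--variance) identity, apply Cauchy--Schwarz over the at most $H$ local steps, and use the stepsize condition to convert each $\eta_s$ into $2\eta_t$. The only cosmetic difference is that you invoke the condition at index $t-H$ (undefined when $t<H$), whereas the paper invokes it at $t_0$, i.e.\ $\eta_s\le\eta_{t_0}\le 2\eta_{t_0+H}\le 2\eta_t$, which covers all $t$ without changing anything else.
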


\paragraph{Optimal Averaging.}
Similar as in~\citep{Lacoste2012:simpler,Shamir2013:averaging,Rakhlin2012suffix} 
we define a suitable averaging scheme for the iterates $\{\bar{\xx}_t\}_{t \geq 0}$ to get the optimal convergence rate. In contrast to~\citep{Lacoste2012:simpler} that use linearly increasing weights, we use quadratically increasing weights, as for instance~\citep{Shamir2013:averaging,stich2018sparse}.
\begin{lemma}[\citep{stich2018sparse}]
\label{lem:averaging}
Let $\{a_t\}_{t\geq 0}$, $a_t \geq 0$, $\{e_t\}_{t \geq 0}$, $e_t \geq 0$ be sequences satisfying
\begin{align}
 a_{t+1} \leq \left(1 - \mu \eta_t\right)a_t - \eta_t e_t A + \eta_t^2 B + \eta_t^3 C\,, \label{eq:recAB}
\end{align}
for $\eta_t = \frac{4}{\mu(a+t)}$ and constants $A>0$, $B,C \geq 0$, $\mu > 0$, $a > 1$. Then
\begin{align}
\frac{A}{S_T} \sum_{t=0}^{T-1} w_t e_t \leq \frac{\mu a^3}{4  S_T} a_0 + \frac{2T(T+2a)}{\mu  S_T} B + \frac{16 T}{\mu^2  S_T}C\,, \label{eq:resAB}
\end{align}
for $w_t = (a+t)^2$ and $S_T := \sum_{t=0}^{T-1} w_t = \frac{T}{6}\left(2 T^2 + 6aT - 3T + 6a^2 - 6a  + 1 \right) \geq \frac{1}{3} T^3$.
\end{lemma}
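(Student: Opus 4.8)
\textbf{Proof proposal for Lemma~\ref{lem:averaging}.}

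The plan is to turn the one-step recursion~\eqref{eq:recAB} into a telescoping sum by multiplying through with the weights $w_t = (a+t)^2$ and choosing the stepsize schedule $\eta_t = \frac{4}{\mu(a+t)}$ so that the coefficients line up. First I would compute $(1-\mu \eta_t) w_t = (a+t)^2 - 4(a+t) = (a+t-2)(a+t-2) - 4 = (a+t-2)^2 - 4$ and compare this against $w_{t-1} = (a+t-1)^2$. The key algebraic fact I expect to use is that $w_t(1-\mu\eta_t) \leq w_{t-1}$, i.e. $(a+t)^2 - 4(a+t) \leq (a+t-1)^2$, which rearranges to $-2(a+t) + 1 \leq 0$; this holds for all $t \geq 0$ as soon as $a > 1$. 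This is precisely where the hypothesis $a>1$ enters, and verifying this inequality cleanly is the first technical checkpoint.

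Granting that, multiplying~\eqref{eq:recAB} by $w_t$ and using the weight inequality gives $w_t a_{t+1} \leq w_{t-1} a_t - w_t \eta_t e_t A + w_t \eta_t^2 B + w_t \eta_t^3 C$. Next I would substitute $\eta_t = \frac{4}{\mu(a+t)}$, so that $w_t \eta_t = \frac{4(a+t)^2}{\mu(a+t)} = \frac{4(a+t)}{\mu}$, and likewise $w_t \eta_t^2 = \frac{16}{\mu^2}$ and $w_t \eta_t^3 = \frac{64}{\mu^3(a+t)}$. Summing the resulting inequality over $t = 0, \dots, T-1$ telescopes the first two terms, leaving on the right-hand side the boundary term $w_{-1} a_0 = (a-1)^2 a_0 \leq a^2 a_0$ together with the accumulated error terms; the negative $A$-term collects into $\frac{4A}{\mu}\sum_{t=0}^{T-1} w_t e_t$ after pulling out the common $\frac{1}{a+t}$ against the extra $(a+t)$ factor. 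I would then drop the nonnegative left endpoint $w_{T-1} a_T \geq 0$ and rearrange to isolate $\frac{4A}{\mu}\sum_{t=0}^{T-1} w_t e_t$ on one side.

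The remaining work is bounding the two accumulated error sums. The $B$-term contributes $\frac{16 B}{\mu^2} \cdot T$ (since each summand is constant), and the $C$-term contributes $\frac{64 C}{\mu^3}\sum_{t=0}^{T-1}\frac{1}{a+t} \leq \frac{64 C}{\mu^3}\cdot \frac{T}{a}$, where I would use $\frac{1}{a+t} \leq \frac{1}{a}$ crudely, or alternatively a harmonic-sum bound; matching the stated constant $\frac{16T}{\mu^2 S_T}C$ in~\eqref{eq:resAB} suggests the intended bound absorbs the factor $\frac{1}{a}$ into the constants via $a>1$. Dividing through by $\frac{4 S_T}{\mu}$ then yields $\frac{A}{S_T}\sum w_t e_t \leq \frac{\mu a^3}{4 S_T} a_0 + \frac{2T(\cdot)}{\mu S_T}B + \frac{16T}{\mu^2 S_T}C$; to recover the exact $T+2a$ factor I would keep the boundary term as $a^3 a_0$ after one more factor of $a$ from the division and track the $B$-sum slightly more carefully rather than using the crude constant count. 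I expect the main obstacle to be bookkeeping the constants so they match~\eqref{eq:resAB} exactly, together with confirming the closed form $S_T = \frac{T}{6}(2T^2 + 6aT - 3T + 6a^2 - 6a + 1)$ by the standard sum-of-squares formula $\sum_{t=0}^{T-1}(a+t)^2$; the inequality $S_T \geq \frac{1}{3}T^3$ then follows by retaining only the leading $\frac{T}{6}\cdot 2T^2$ term and checking the lower-order remainder is nonnegative for $a > 1$.
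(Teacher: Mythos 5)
There is a genuine gap, and it lies exactly in the choice of multiplier for the telescoping. You multiply the recursion by $w_t=(a+t)^2$, which indeed makes the $a_t$-coefficients telescope (your inequality $w_t(1-\mu\eta_t)\leq w_{t-1}$ is true), but it leaves the stepsize attached to the error term: the $A$-term in your summed inequality is $A\sum_{t=0}^{T-1} w_t\eta_t e_t$, and since $w_t\eta_t = \frac{4(a+t)}{\mu}$, this is $\frac{4A}{\mu}\sum_{t=0}^{T-1}(a+t)e_t$ --- a \emph{linearly} weighted sum. Your claim that this ``collects into $\frac{4A}{\mu}\sum_{t=0}^{T-1} w_t e_t$ after pulling out the common $\frac{1}{a+t}$ against the extra $(a+t)$ factor'' is false: $\frac{4(a+t)}{\mu}\neq\frac{4(a+t)^2}{\mu}$, and since the $e_t$ are arbitrary nonnegative numbers, a bound on the linearly weighted sum does not yield the bound \eqref{eq:resAB} on the quadratically weighted sum (converting costs an extra factor of order $T$, which destroys the rate). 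So as written, your argument proves a different statement than the lemma.

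The fix --- which is what the paper does --- is to multiply \eqref{eq:recAB} by $\frac{w_t}{\eta_t}=\frac{\mu(a+t)^3}{4}$ instead of by $w_t$. Then the error term carries coefficient $\frac{w_t}{\eta_t}\cdot\eta_t = w_t$, exactly the quadratic weights in the claim, and the telescoping condition becomes $(1-\mu\eta_t)\frac{w_t}{\eta_t}\leq\frac{w_{t-1}}{\eta_{t-1}}$, i.e.\ the cubic inequality $(a+t-4)(a+t)^2\leq(a+t-1)^3$ (which holds for $a\geq 1$, $t\geq 0$, since the difference is $-(s^2+3s-1)\le 0$ for $s=a+t\geq 1$); note this is a different, stronger-looking inequality than the quadratic one you verified, so it must be checked separately. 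With this multiplier the bookkeeping also comes out exactly as stated: the boundary term is $\frac{w_0}{\eta_0}a_0=\frac{\mu a^3}{4}a_0$, the $B$-term has coefficients $\frac{w_t}{\eta_t}\eta_t^2=\frac{4(a+t)}{\mu}$ summing to $\frac{2T^2+4aT-2T}{\mu}\leq\frac{2T(T+2a)}{\mu}$, and the $C$-term has constant coefficients $\frac{w_t}{\eta_t}\eta_t^3=\frac{16}{\mu^2}$ summing to $\frac{16T}{\mu^2}$; dividing by $S_T$ gives \eqref{eq:resAB} with no loose constants to absorb. Your closing remarks on the closed form of $S_T$ and the bound $S_T\geq\frac13 T^3$ are fine.
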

\begin{proof}
This is a reformulation of Lemma~3.3 in~\citep{stich2018sparse}.
\end{proof}

\paragraph{Proof of Theorem~\ref{thm:main}.}
By convexity of $f$ we have $\E f(\hat{\xx}_T)- f^\star \leq \frac{1}{S_T} \sum_{t=0}^{T-1} w_t \E \bigl( f(\bar{\xx}_t)-f^\star \bigr)$. The proof of the theorem thus follows immediately from the four lemmas that we have presented, i.e. by Lemma~\ref{lem:averaging} with $e_t := \E  (f(\bar{\xx}_t)-f^\star)$ and constants $A=\frac{1}{2}$,  (Lemma~\ref{lemma:perturbed}), $B=
\frac{\sigma^2}{K}$, (Lemma~\ref{lem:barsigma}) and $C=8G^2H^2L$, (Lemma~\ref{lem:deviation}).
Observe that the stepsizes $\eta_t = \frac{4}{\mu(a+t)}$ satisfy both the conditions of Lemma~\ref{lemma:perturbed} ($\eta_0 = \frac{4}{\mu a} \leq \frac{1}{4L}$, as $a \geq 16 \kappa$) and of Lemma~\ref{lem:deviation} $\big(\frac{\eta_t}{\eta_{t+H}} = \frac{a+t+H}{a+t} \leq 2$, as $a \geq H\big)$.
\hfill$\Box$

\section{Numerical Illustration}
\label{sec:experiments}
In this section we show some numerical experiments to illustrate the results of Theorem~\ref{thm:main}.

\begin{figure}[t]
    \centering
    \vspace{-.9cm}
    \begin{subfigure}[b]{0.48\textwidth}
        \includegraphics[width=\textwidth]{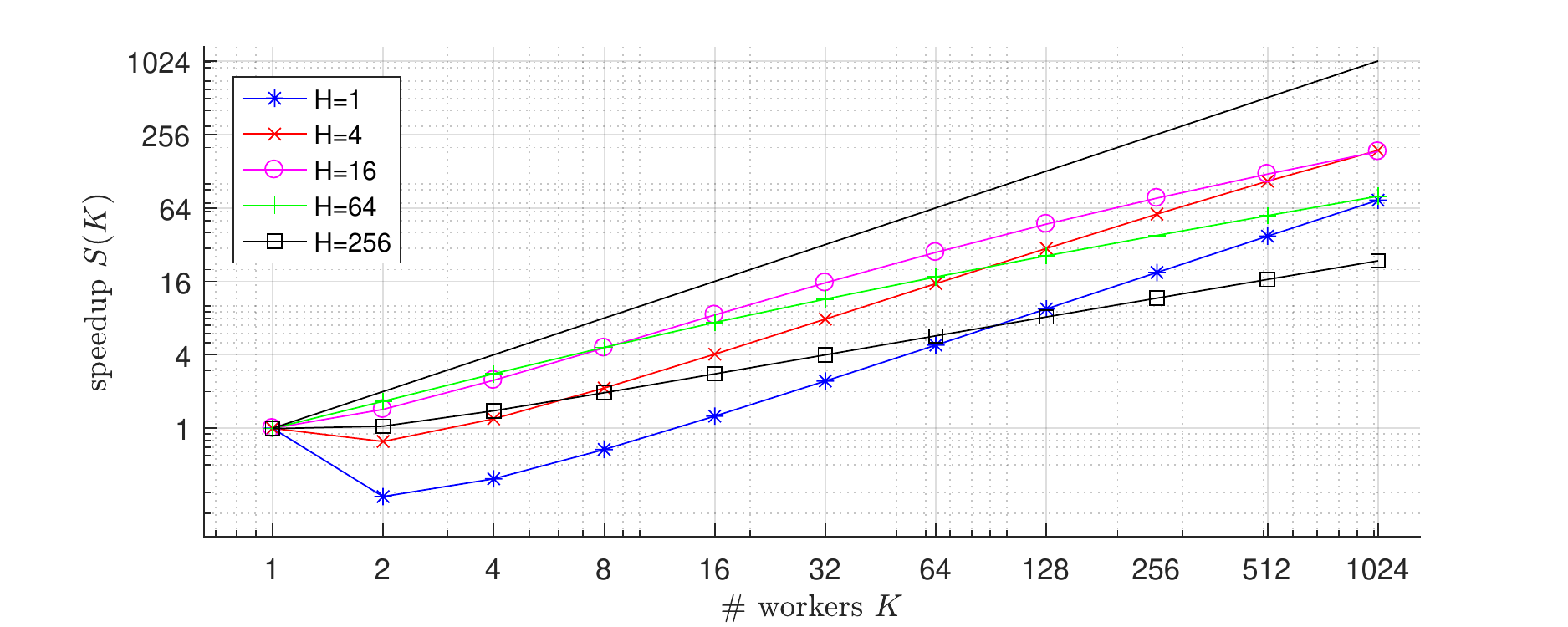}
        \caption{Theoretical speedup $S(K)$ ($\epsilon > 0$, $T$ small).}
        \label{fig:theory_fin}
    \end{subfigure}
    ~ %
    \begin{subfigure}[b]{0.48\textwidth}
        \includegraphics[width=\textwidth]{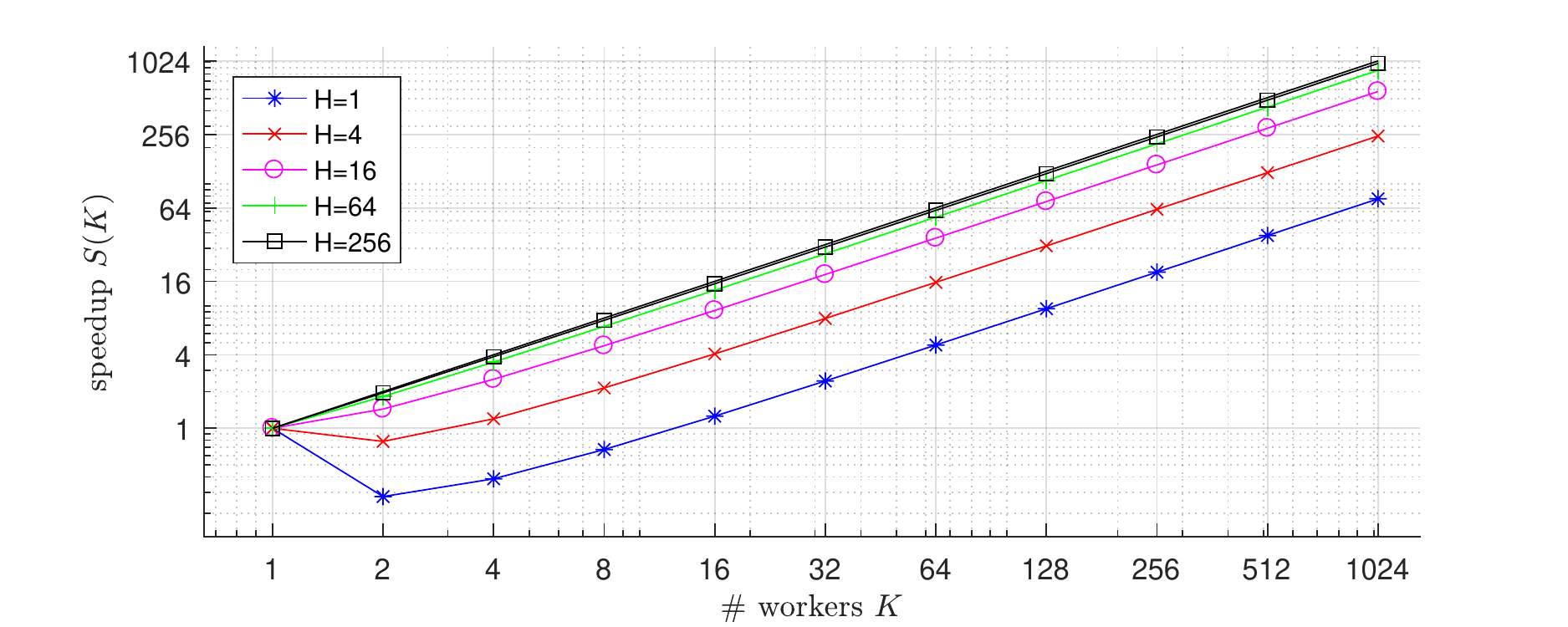}
        \caption{Theoretical speedup $S(K)$ ($\epsilon = 0$, $T \to \infty$).}
        \label{fig:theory_inf}
    \end{subfigure}
    \vspace{-0.2cm}
    \caption{Theoretical speedup of local SGD for different numbers of workers $K$ and $H$.}\label{fig:theory}
    \centering
    \begin{subfigure}[b]{0.48\textwidth}
        \includegraphics[width=\textwidth]{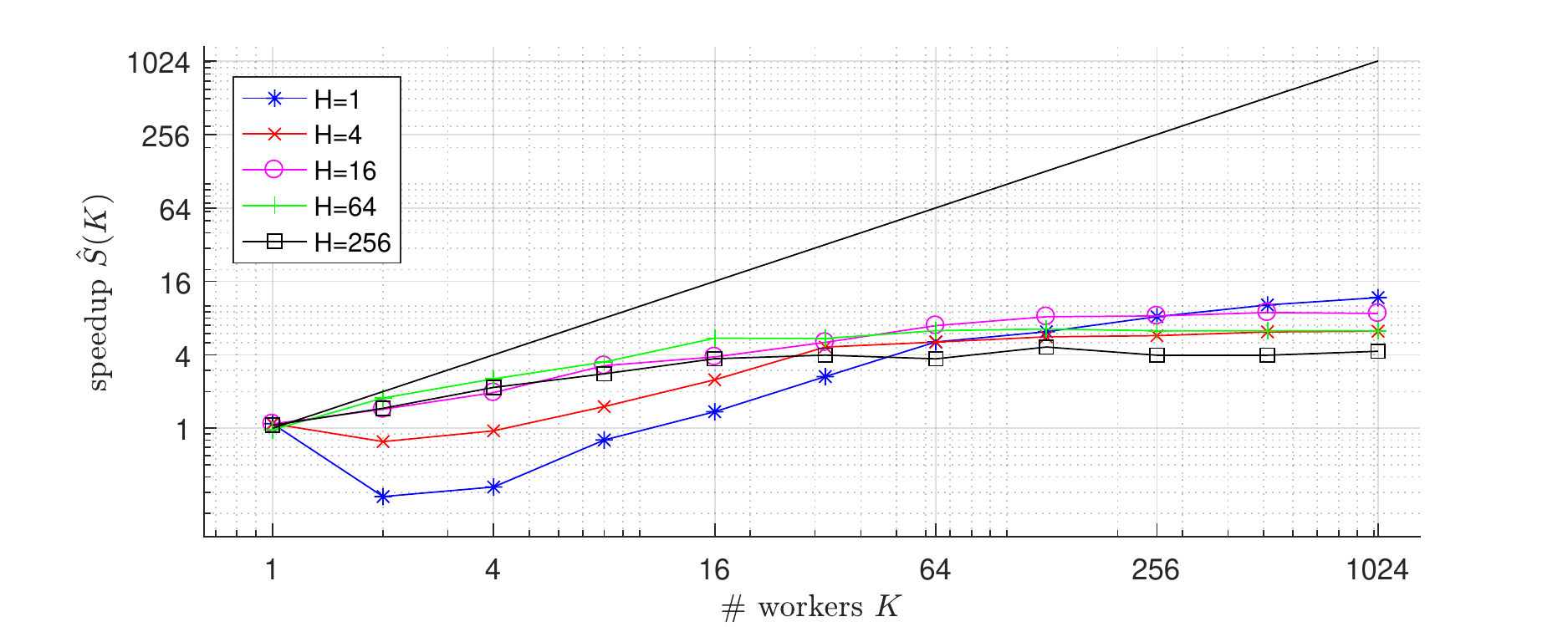}
        \caption{Measured speedup, $\epsilon = 0.005$.}
        \label{fig:speedup_b4_fin}
    \end{subfigure}
    \begin{subfigure}[b]{0.48\textwidth}
        \includegraphics[width=\textwidth]{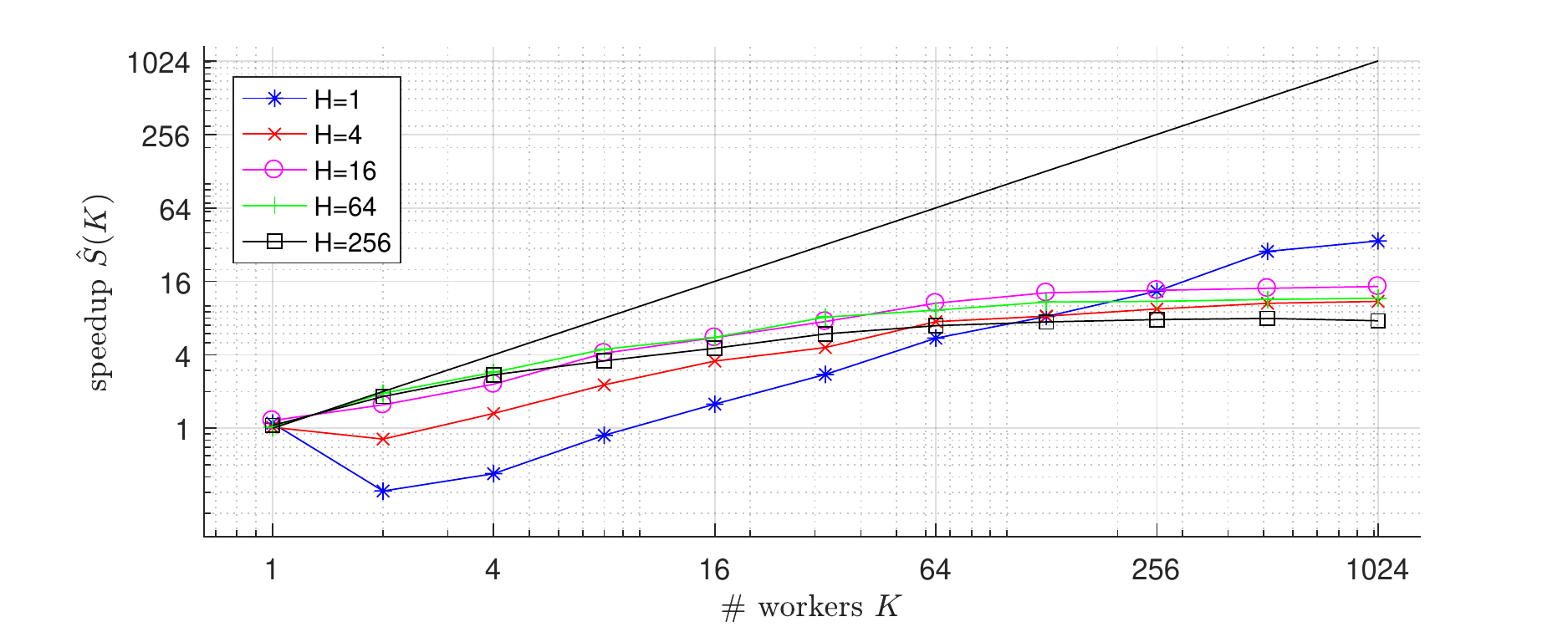}
        \caption{Measured speedup, $\epsilon = 0.0001$.}
        \label{fig:speedup_b4_inf}
    \end{subfigure}
        \vspace{-0.2cm}
    \caption{Measured speedup of local SGD with mini-batch $b=4$ for different numbers of workers $K$ and parameters $H$.}\label{fig:speedup_b=4}
        \vspace{-0.2cm}
\end{figure}

\paragraph{Speedup.} 
When Algorithm~\ref{alg:localsgd} is implemented in a distributed setting, there are two components that determine the wall-clock time: (i) the total number of gradient computations, $TK$, and (ii) the total time spend for communication. In each communication round $2(K-1)$ vectors need to be exchanged, and there will be $T/H$ communication rounds. Typically, the communication is more expensive than a single gradient computation. We will denote this ratio by a factor $\rho \geq 1$ (in practice, $\rho$ can be 10--100, or even larger on slow networks). The parameter $T$ depends on the desired accuracy $\epsilon > 0$, and according to~\eqref{eq:asympt} we roughly have $T(\epsilon,H,K) \approx \frac{1}{K \epsilon }\left(\tfrac{1}{2} + \tfrac{1}{2}\sqrt{1 + \epsilon (1+H+H^2K)} \right)$. Thus, the theoretical speedup $S(K)$ of local SGD on $K$ machines compared to SGD on one machine ($H=1$, $K=1$) is
\begin{align}
 S(K) = \frac{K}{\left(\tfrac{1}{2} + \tfrac{1}{2}\sqrt{1 + \epsilon (1+H+H^2K)}\right)\left(1+2\rho \frac{(K-1)}{H}\right)}\,. \label{eq:speedup}
\end{align}

\paragraph{Theoretical.} Examining~\eqref{eq:speedup}, we see that (i) increasing $H$ can reduce negative scaling effects due to parallelization (second bracket in the denominator of~\eqref{eq:speedup}), and (ii) local SGD only shows linear scaling for $\epsilon \lll 1$ (i.e. $T$ large enough, in agreement with the theory). In Figure~\ref{fig:theory} we depict $S(K)$, once for $\epsilon = 0$ in Figure~\ref{fig:theory_inf}, and for positive $\epsilon > 0$ in Figure~\ref{fig:theory_fin} under the assumption $\rho = 25$. We see that for $\epsilon = 0$ the largest values of $H$ give the best speedup, however, when only a few epochs need to be performed, then the optimal values of $H$ change with the number of workers~$K$. We also see that for a small number of workers $H=1$ is never optimal. If $T$ is unknown, then these observations seem to indicate that the technique from~\citep{Zhang2016:parallelLocalSGD}, i.e.\ adaptively increasing $H$ over time seems to be a good strategy to get the best choice of $H$ when the time horizon is unknown.

\paragraph{Experimental.} 
We examine the practical speedup on a logistic regression problem,
$f(\xx) = \frac{1}{n}\sum_{i=1}^n\log(1+\exp(-b_i\aa_i^\top\xx))+\frac{\lambda}{2}\|\xx\|^2$,
where $\aa_i \in\R^d$ and $b_i \in \{-1,+1\}$ are the data samples. %
The regularization parameter is set to $\lambda=1/n$. We consider the  \texttt{w8a} dataset~\citep{Platt99w8a} ($d = 300, n=49749$). We initialize all runs with $\xx_0 = \mathbf{0}_d$ and measure the number of iterations to reach the target accuracy $\epsilon$. We consider the target accuracy reached, when either the last iterate, the uniform average, the average with linear weights, or the average with quadratic weights (such as in Theorem~\ref{thm:main}) reaches the target accuracy. By extensive grid search we determine for each configuration $(H,K,B)$ the best stepsize from the set $\{\min(32,\frac{cn}{t+1}),32c\}$, where $c$ can take the values $c=2^{i}$ for $i \in \mathbb{Z}$. For more details on the experimental setup refer Section~\ref{sec:expdetails} in the appendix. 
We depict the results in Figure~\ref{fig:speedup_b=4}, again under the assumption $\rho = 25$.

\paragraph{Conclusion.}
The restriction on $H$ imposed by theory is not severe for $T \to \infty$. %
Thus, for training that either requires many passes over the data or that is performed only on a small cluster, large values of $H$ are advisable. However, for smaller $T$ (few passes over the data), the $O(1/\sqrt{K})$ dependency
shows significantly in the experiment. %
This has to be taken into account when deploying the algorithm on a massively parallel system, for instance through the technique mentioned in~\citep{Zhang2016:parallelLocalSGD}.

\section{Asynchronous Local SGD}
\label{sec:async}

In this section we present asynchronous local SGD that does not require that the local sequences are synchronized. This does not only reduce communication bottlenecks, but by using load-balancing techniques the algorithm can optimally be tuned to heterogeneous settings (slower workers do less computation between synchronization, and faster workers do more). We will discuss this in more detail in Section~\ref{sec:comments}.

Asynchronous local SGD generates in parallel $K$ sequences $\{\xx_t^k\}_{t=0}^T$ of iterates, $k \in [K]$. Similar as in Section~\ref{sec:algorithm} we introduce sets of synchronization indices, $\mathcal{I}_t^k \subseteq [T]$ with $T \in \mathcal{I}_T^k$ for $k \in [K]$. Note that the sets do not have to be equal for different workers. Each worker $k$ evolves locally a sequence $\xx_t^k$ in the following way: \vspace{-2mm}
\begin{align}
 \xx_{t+1}^k = \begin{cases}
                 \xx_{t}^k - \gamma_t \nabla f_{i_t^k} (\xx_t^k) & \text{if $t+1 \notin \mathcal{I}_T^k$}\\
                 \bar{\bar{\xx}}_{t+1}^k & \text{if $t+1 \in \mathcal{I}_T^k$}\\
               \end{cases} \label{eq:asynclocalsgd}
\end{align}
where $\bar{\bar{\xx}}_{t+1}^k$ denotes the state of the aggregated variable at the time when worker $k$ reads the aggregated variable. To be precise, we use the notation
\begin{align}
 \bar{\bar{\xx}}_t^k = \xx_0  - \frac{1}{K}\sum_{h=1}^K \sum_{j=0}^{t-1} \mathds{1}_{j \in \mathcal{W}_t^{k,h}} (\gamma_j \nabla f_{i_j^k}(\xx_{j}^k))\,,
\end{align}
where $\mathcal{W}_t^{k,h} \subseteq [T]$ denotes all updates that have been written at the time the read takes place. The sets $\mathcal{W}_t^{k,h}$ are indexed by iteration $t$, worker $k$ that initiates the read and $h \in [K]$. Thus $\mathcal{W}_t^{k,h}$ denotes all updates of the local sequence $\{\xx_t^h\}_{t \geq 0}$, that have been reported back to the server at the time worker $k$ reads (in iteration $t$). This notation is necessary, as we don't necessarily have $\mathcal{W}_t^{k,h} = \mathcal{W}_t^{k',h}$ for $k \neq k'$. We have $\mathcal{W}_t^{k,h} \subseteq \mathcal{W}_{t'}^{k,h}$ for $t' \geq t$, as updates are not overwritten.
When we cast synchronized local SGD in this notation, then it holds $\mathcal{W}_t^{k,h} = \mathcal{W}_t^{k',h'}$ for all $k,h,k',h'$, as all the writes and reads are synchronized.

\begin{figure*}[t]
\begin{minipage}{\linewidth}
\vspace{\iclralgotopsep}
\begin{algorithm}[H]
\begin{algorithmic}[1]
\item Initialize variables $\xx_0^k = \xx_0$, $r^k = 0$ for $k \in [K]$, aggregate $\bar{\bar{\xx}} = \xx_0$.
\ParFor{$k \in [K]$}
 \For{$t$ \textbf{in} $0\dots T-1$}
  \State Sample $i_t^k$ uniformly in $[n]$
  \State $\xx_{t+1}^k \gets \xx_{t}^k -  \eta_t \nabla f_{i_t^k}(\xx_t^k)$ \Comment{local update}
  \If{$t+1 \in \mathcal{I}_T^k$}
  \State $\bar{\bar{\xx}} \gets \operatorname{add}(\bar{\bar{\xx}},\frac{1}{K}(\xx_{t+1}^k - \xx_{r^k}^k))$ \Comment{atomic aggregation of the updates}
  \State $\xx_{t+1}^k \gets \operatorname{read}(\bar{\bar{\xx}})$;
  \State $r^k \gets t+1$ \Comment{iteration/time of last read}
  \EndIf
 \EndFor
\EndParFor
\end{algorithmic}
\caption{\textsc{Asynchronous Local SGD (schematic)}}\label{alg:async}
\vspace{-0.1cm}
\end{algorithm}
\end{minipage}
\end{figure*}

\begin{theorem}
\label{thm:async}
Let $f$, $\sigma$, $G$ and $\kappa$ be as in Theorem~\ref{thm:async} and sequences $\{\xx_t^k\}_{t= 0}^T$ for $k \in [K]$ generated according to~\eqref{eq:asynclocalsgd} with  $\operatorname{gap}(\mathcal{I}_T^k) \leq H$ for $k \in K$ and for stepsizes $\eta_t = \frac{4}{\mu(a + t)}$ with shift parameter $a > \max\{16 \kappa, H +\tau \}$ for delay $\tau > 0$. 
If $\mathcal{W}_{t}^{k,h} \supseteq [t-\tau]$ for all $k,h \in [K]$, $t \in [T]$, then
\begin{align}
\E f(\hat{\xx}_T) - f^\star &\leq \frac{\mu a^3}{2  S_T} \norm{\xx_0 - \xx^\star}^2 + \frac{4T(T+2a)}{\mu K S_T} \sigma^2 + \frac{768 T}{\mu^2   S_T}G^2 (H+\sigma)^2 L\,, \label{eq:async}
\end{align}
where 
$\hat{\xx}_T = \frac{1}{K S_T} \sum_{k=1}^K \sum_{t=0}^{T-1} w_t \xx_t^k$, for $w_t = (a+t)^2$ and $S_T = \sum_{t=0}^{T-1} w_t \geq \frac{1}{3} T^3$.
\end{theorem}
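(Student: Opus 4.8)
The plan is to mirror the four-lemma structure that produced Theorem~\ref{thm:main}, reusing three of the four ingredients essentially verbatim and replacing only the deviation bound. First I would take as virtual sequence the \emph{fully aggregated} iterate
\[
\bar{\xx}_t := \xx_0 - \frac{1}{K}\sum_{h=1}^K \sum_{j=0}^{t-1} \eta_j \nabla f_{i_j^h}(\xx_j^h)\,,
\]
i.e.\ the state the server would hold if every write were instantaneous. With $\gg_t = \frac1K\sum_k \nabla f_{i_t^k}(\xx_t^k)$ and $\bar{\gg}_t = \frac1K\sum_k\nabla f(\xx_t^k)$ defined exactly as in the synchronous case, one checks $\bar{\xx}_{t+1} = \bar{\xx}_t - \eta_t\gg_t$ and $\E\gg_t = \bar{\gg}_t$. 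Consequently Lemma~\ref{lemma:perturbed} and Lemma~\ref{lem:barsigma} apply unchanged, since their proofs use only this recursion together with $L$-smoothness, $\mu$-strong convexity, and the bounded variance $\sigma^2$. The entire burden of the asynchrony is therefore isolated in the term $\frac1K\sum_k\E\norm{\bar{\xx}_t - \xx_t^k}^2$.

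The key step is an asynchronous analogue of Lemma~\ref{lem:deviation} establishing $\frac1K\sum_k\E\norm{\bar{\xx}_t - \xx_t^k}^2 \le 12\,\eta_t^2 G^2 (H+\tau)^2$ (here the delay $\tau$ plays the role the synchronization gap played before). I would expand $\bar{\xx}_t - \xx_t^k$ explicitly: letting $r=r^k(t)\le t$ be worker $k$'s most recent read, the gap condition gives $t-r\le\operatorname{gap}(\mathcal{I}_T^k)\le H$ and $\xx_t^k = \bar{\bar{\xx}}_r^k - \sum_{j=r}^{t-1}\eta_j\nabla f_{i_j^k}(\xx_j^k)$. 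Subtracting from $\bar{\xx}_t$ and using the definition of $\bar{\bar{\xx}}_r^k$, every gradient term with $j<r$ and $j\in\mathcal{W}_r^{k,h}$ cancels against its counterpart in $\bar{\xx}_t$. What survives is, per sequence $h$, the updates with $r\le j<t$ (at most $H$ of them) and the updates with $j<r$ not yet written at the read, i.e.\ $j\notin\mathcal{W}_r^{k,h}$; the hypothesis $\mathcal{W}_r^{k,h}\supseteq[r-\tau]$ forces $j\ge r-\tau$, bounding their number by $\tau$. Together with worker $k$'s own $t-r\le H$ local steps, $\bar{\xx}_t - \xx_t^k$ is thus a combination of $O(H+\tau)$ gradients per sequence. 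Invoking $\E_i\norm{\nabla f_i(\cdot)}^2\le G^2$, the monotonicity $\eta_j\le 2\eta_t$ for $j\ge t-H-\tau$ (valid since $a\ge H+\tau$), and Jensen to pass the square through the count yields the claimed $(H+\tau)^2$ scaling.

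With these three bounds I would then close the argument exactly as for Theorem~\ref{thm:main}: by convexity $\E f(\hat{\xx}_T)-f^\star \le \frac1{S_T}\sum_{t=0}^{T-1} w_t\,\E(f(\bar{\xx}_t)-f^\star)$, and Lemma~\ref{lem:averaging} applied with $e_t=\E(f(\bar{\xx}_t)-f^\star)$, $A=\frac12$, $B=\sigma^2/K$, and $C=24\,G^2(H+\tau)^2 L$ reproduces~\eqref{eq:async}; note that $32\cdot 24 = 768$ matches the leading constant, and that $\eta_t=\frac{4}{\mu(a+t)}$ again satisfies every lemma's hypotheses, since $a\ge 16\kappa$ gives $\eta_0\le\frac1{4L}$ and $a\ge H+\tau$ gives the required stepsize monotonicity.

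The hard part will be making the deviation accounting fully rigorous. Unlike the synchronous case, where all workers hard-reset to the common point $\bar{\xx}_t$ at each synchronization, here reads and writes interleave arbitrarily and the witnessed update sets $\mathcal{W}_r^{k,h}$ differ across workers and even across the two parties of a single exchange. The delicate point is to verify that, after cancellation of commonly-seen updates, the residual is genuinely supported on a window of width at most $H+\tau$ per sequence; this is precisely where the bounded-staleness assumption $\mathcal{W}_t^{k,h}\supseteq[t-\tau]$ does the work, together with the monotonicity $\mathcal{W}_t^{k,h}\subseteq\mathcal{W}_{t'}^{k,h}$ for $t'\ge t$, which guarantees that the read $\bar{\bar{\xx}}_r^k$ is an honest sub-sum of $\bar{\xx}_t$ rather than containing overwritten or spurious terms. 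A secondary subtlety is confirming $\E\gg_t=\bar{\gg}_t$ persists despite the delays; this holds because only the aggregation is delayed, while each fresh gradient $\nabla f_{i_t^k}(\xx_t^k)$ is still sampled at the worker's current iterate.
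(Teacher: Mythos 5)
Your proposal follows essentially the same route as the paper: the same fully-aggregated virtual sequence, the unchanged reuse of Lemma~\ref{lemma:perturbed} and Lemma~\ref{lem:barsigma}, an asynchronous deviation lemma bounding $\frac{1}{K}\sum_{k}\E\norm{\bar{\xx}_t-\xx_t^k}^2 = O\bigl(\eta_t^2G^2(H+\tau)^2\bigr)$ via the bounded-staleness assumption, and the identical closing application of Lemma~\ref{lem:averaging} with $A=\tfrac12$, $B=\sigma^2/K$, $C=O\bigl(LG^2(H+\tau)^2\bigr)$. The only difference is cosmetic: the paper organizes the deviation bound as a three-term split through the anchor $\bar{\xx}_{t_0'}$ (local drift, read staleness, aggregate drift) rather than your direct cancellation accounting, and both correctly identify that the stated constant $768=2\cdot 16\cdot 24$ corresponds to $(H+\tau)^2$ (the theorem's $(H+\sigma)^2$ being a typo).
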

Hence, for $T$ large enough and $(H+\tau)=O(\sqrt{T/K})$, asynchronous local SGD converges with rate $O\bigl(\frac{G^2}{KT}\bigr)$, the same rate as synchronous local SGD.

\section{Conclusion}
We prove convergence of synchronous and asynchronous local SGD and are the first to show that local SGD (for nontrivial values of $H$) attains theoretically linear speedup on strongly convex functions when parallelized among $K$ workers. We show that local SGD saves up to a factor of $O(T^{1/2})$ in global communication rounds compared to mini-batch SGD, while still converging at the same rate in terms of total stochastic gradient computations.

Deriving more concise convergence rates for local SGD could be an interesting future direction that could deepen our understanding of the scheme. 
For instance one could aim for a more fine grained analysis in terms of bias and variance terms (similar as e.g. in~\cite{Dekel2012:minibatch,Jain2018:leastsquares}), relaxing the assumptions (here we relied on the bounded gradient assumption), or investigating the data dependence (e.g. by considering data-depentent measures like e.g. gradient diversity~\cite{Yin18a:diversity}).
There are also no apparent reasons that would limit the extension of the theory to non-convex objective functions; 
Lemma~\ref{lem:deviation} does neither use the smoothness nor the strong convexity assumption, so this can be applied in the non-convex setting as well.
We feel that the positive results shown here can motivate and spark further research 
on non-convex problems. Indeed, very recent work~\citep{Zhou2018:Kaveraging,Yu2018parallel} analyzes local SGD for non-convex optimization problems and shows convergence of SGD to a stationary point, though the restrictions on $H$ are stronger than here.

\section*{Acknowledgments}
The author thanks Jean-Baptiste Cordonnier, Tao Lin and Kumar Kshitij Patel for spotting various typos in the first versions of this manuscript, as well as Martin Jaggi for his support.

\small
\bibliography{relevant-papers}
\bibliographystyle{iclr2019_conference}
\clearpage

\appendix
\normalsize
\section{Missing Proofs for Synchronized Local SGD}
\label{sec:proofsdetail}
In this section we provide the proofs for the three lemmas that were introduced in Section~\ref{sec:theory}.

\begin{proof}[\bf Proof of Lemma~\ref{lemma:perturbed}]
Using the update equation~\eqref{eq:bar} we have
\begin{align}
\norm{\bar{\xx}_{t+1}-\xx^\star}^2 &= \norm{\bar{\xx}_t - \eta_t \gg_t -\xx^\star }^2  = \norm{\bar{\xx}_t - \eta_t \gg_t -\xx^\star - \eta_t  \bar{\gg}_t + \eta_t  \bar{\gg}_t }^2  \\
 &= \norm{\bar{\xx}_t-\xx^\star - \eta_t  \bar{\gg}_t}^2 + \eta_t^2 \norm{ \gg_t -  \bar{\gg}_t}^2  + 2\eta_t \lin{\bar{\xx}_t-\xx^\star - \eta_t  \bar{\gg}_t, \bar{\gg}_t-\gg_t} \,. \label{eq:start}
\end{align}
Observe that
\begin{align}
\norm{\bar{\xx}_t-\xx^\star - \eta_t  \bar{\gg}_t}^2 &= \norm{\bar{\xx}_t-\xx^\star}^2 + \eta_t^2 \norm{\bar{\gg}_t}^2 - 2 \eta_t \lin{\bar{\xx}_t-\xx^\star, \bar{\gg}_t}  \\
&= \norm{\bar{\xx}_t-\xx^\star}^2 + \eta_t^2  \norm{\bar{\gg}_t}^2   - 2\eta_t \frac{1}{K} \sum_{k=1}^K \lin{\bar{\xx}_t-\xx^\star, \nabla f(\xx_t^k)}\\
\begin{split}
&\leq \norm{\bar{\xx}_t-\xx^\star}^2 + \eta_t^2 \frac{1}{K} \sum_{k=1}^K \norm{\nabla f(\xx_t^k)}^2  \\
&\qquad - 2\eta_t \frac{1}{K} \sum_{k=1}^K \lin{\bar{\xx}_t - \xx_k^t + \xx_k^t -\xx^\star, \nabla f(\xx_t^k)} 
\end{split} \label{eq:cont1}\\
\begin{split}
&= \norm{\bar{\xx}_t-\xx^\star}^2 + \eta_t^2 \frac{1}{K} \sum_{k=1}^K \norm{\nabla f(\xx_t^k) - \nabla f(\xx^\star)}^2  \\
&\qquad - 2\eta_t \frac{1}{K} \sum_{k=1}^K \lin{ \xx_k^t -\xx^\star, \nabla f(\xx_t^k)}  - 2\eta_t \frac{1}{K} \sum_{k=1}^K \lin{\bar{\xx}_t - \xx_k^t, \nabla f(\xx_t^k)}\,, \label{eq:beforeexp}
\end{split}
\end{align}
where we used the inequality  $\lVert \sum_{i=1}^K \aa_i \rVert^2 \leq K \sum_{i=1}^K \norm{\aa_i}^2$ in~\eqref{eq:cont1}. %
By $L$-smoothness, 
\begin{align}
\norm{\nabla f(\xx_t^k)- \nabla f(\xx^\star)}^2 \leq 2L(f(\xx_t^k) - f^\star)\,, \label{eq:followfromL}
\end{align}
and by $\mu$-strong convexity
\begin{align}
  -  \lin{\xx_t^k-\xx^\star, \nabla f(\xx_t^k)} &\leq - (f(\xx_t^k)- f^\star) - \frac{\mu}{2}\norm{\xx_t^k-\xx^\star}^2\,.
\end{align}
To estimate the last term in~\eqref{eq:beforeexp} we use $ 2 \lin{\aa,\bb} \leq \gamma \norm{\aa}^2 + \gamma^{-1} \norm{\bb}^2$, for $\gamma > 0$. This gives
\begin{align}
-2 \lin{\bar{\xx}_t - \xx_k^t, \nabla f(\xx_t^k)} &\leq2L \norm{\bar{\xx}_t - \xx_k^t}^2 + \frac{1}{2L} \norm{\nabla f(\xx_t^k) }^2  \\
&= 2L \norm{\bar{\xx}_t - \xx_k^t}^2 + \frac{1}{2L} \norm{\nabla f(\xx_t^k) - \nabla f(\xx^\star) }^2 \\
&\leq 2L \norm{\bar{\xx}_t - \xx_k^t}^2 + (f(\xx_t^k) - f^\star)\,, \label{eq:scalarbound}
\end{align} 
where we have again used~\eqref{eq:followfromL} in the last inequality.
By applying these three estimates to~\eqref{eq:beforeexp} we get
\begin{align}
\begin{split}
\norm{\bar{\xx}_t-\xx^\star - \eta_t  \bar{\gg}_t}^2  &\leq  \norm{\bar{\xx_t}-\xx^\star}^2  + 2 \eta_t \frac{L}{K} \sum_{k=1}^K  \norm{\bar{\xx}_t - \xx_k^t}^2 \\ &\qquad + 2 \eta_t \frac{1}{K} \sum_{k=1}^K \left(  \left(\eta_t L -\frac{1}{2}\right)  (f(\xx_t^k)- f^\star) - \frac{\mu}{2}\norm{\xx_t^k-\xx^\star}^2 \right) \,.
\end{split}  \label{eq:smallL}
\end{align}

For $\eta_t \leq \frac{1}{4L}$ it holds $\bigl(\eta_t L -\frac{1}{2}\bigr) \leq -\frac{1}{4}$. 
By convexity of $a \left(f(\xx) - f^\star\right)+ b \norm{\xx- \xx^\star }^2$ for $a,b \geq 0$:
\begin{align}
- \frac{1}{K}\sum_{k=1}^K \left( a (f(\xx_t^k)- f^\star) +  b \norm{\xx_t^k-\xx^\star}^2 \right) \leq -  \left(a (f(\bar{\xx}_t)- f^\star) +  b\norm{\bar{\xx}_t-\xx^\star}^2 \right) \,,
\end{align}
hence we can continue in~\eqref{eq:smallL} and obtain
\begin{align}
\begin{split}
\norm{\bar{\xx}_t-\xx^\star - \eta_t  \bar{\gg}_t}^2  &\leq  (1-\mu \eta_t) \norm{\bar{\xx}_t-\xx^\star}^2  -\frac{1}{2} \eta_t (f(\bar{\xx}_t)- f^\star)   + 2 \eta_t  \frac{L}{K} \sum_{k=1}^K   \norm{\bar{\xx}_t - \xx_t^k}^2  \,. 
\end{split} \label{eq:barstep}
\end{align}
Finally, we can plug~\eqref{eq:barstep} back into~\eqref{eq:start}. By taking expectation we get
\begin{align}
\E \norm{\bar{\xx}_{t+1}-\xx^\star}^2 &\leq (1-\mu \eta_t) \E \norm{\bar{\xx}_t-\xx^\star}^2  + \eta_t^2 \E \norm{\gg_t - \bar{\gg}_t}^2 \notag  \\
&\qquad - \frac{1}{2} \eta_t \E (f(\bar{\xx}_t)- f^\star) + 2 \eta_t \frac{L}{K} \sum_{k=1}^K \E \norm{\bar{\xx}_t - \xx_t^k}^2\,. \qedhere
\end{align}
\end{proof}

\begin{proof}[\bf Proof of Lemma~\ref{lem:barsigma}]
By definition of $\gg_t$ and $\bar{\gg}_t$ we have
\begin{align}
\E \norm{\gg_t - \bar{\gg}_t}^2 
 = \E  \Big\lVert \frac{1}{K}\sum_{k=1}^K\! \left(\nabla f_{i_t^k}(\xx_t^k) - \nabla f(\xx_t^k) \right) \Big\rVert^2 \!\! = \frac{1}{K^2} \sum_{k=1}^K \E \norm{\nabla f_{i_t^k}(\xx_t^k) - \nabla f(\xx_t^k) }^2 \leq \frac{\sigma^2}{K} \,,
\end{align}
where we used $\operatorname{Var}(\sum_{k=1}^K X_k) = \sum_{k=1}^K \operatorname{Var}(X_k)$ for independent random variables.
\end{proof}

\begin{proof}[\bf Proof of Lemma~\ref{lem:deviation}]
As the $\operatorname{gap}(\mathcal{I}_T) \leq H$, there is an index $t_0$, $t-t_0 \leq H$ such that $\bar{\xx}_{t_0} = \xx_{t_0}^k$ for $k \in [K]$. Observe, using $\E\norm{X-\E X}^2 = \E\norm{X}^2 -\norm{\E X}^2$ and $\lVert \sum_{i=1}^H \aa_i \rVert ^2 \leq H \sum_{i=1}^H \norm{\aa_i}^2$,
\begin{align}
\frac{1}{K} \sum_{k=1}^K  \E \norm{\bar{\xx}_t - \xx_t^k}^2 &= \frac{1}{K} \sum_{k=1}^K  \E \norm{\xx_t^k - \xx_{t_0}   - (\bar{\xx}_t - \xx_{t_0})}^2 \\
&\leq \frac{1}{K} \sum_{k=1}^K  \E \norm{\xx_t^k - \xx_{t_0}}^2  \\
& \leq \frac{1}{K} \sum_{k=1}^K H \eta_{t_0}^2 \sum_{h=t_0}^{t-1} \E \norm{ \nabla f_{i_h^k}(x_h^k)}^2 \\
& \leq \frac{1}{K} \sum_{k=1}^K H^2 \eta_{t_0}^2 G^2\,, \label{eq:cont3}
\end{align}
where we used $\eta_t \leq \eta_{t_0}$ for $t \geq t_0$ and the assumption $ \E \lVert \nabla f_{i_h^k}(\xx_h^k) \rVert^2 \leq G^2$. Finally, the claim follows by the assumption on the stepsizes, $\frac{\eta_{t_0}}{\eta_t} \leq  2$.
\end{proof}

\seb{Not needed anymore

\begin{proof}[\bf Proof of Lemma~\ref{lem:averaging}]
Observe
\begin{align}
 \hskip-2.3ex
 \left(1-\mu \eta_t\right) \frac{w_t}{\eta_t} &= \left(\frac{a+t - 4}{a+t} \right) \frac{\mu (a+t)^3}{4} = \frac{\mu (a+t-4)(a+t)^2}{4}
 \leq \frac{\mu (a+t-1)^3}{4} = \frac{w_{t-1}}{\eta_{t-1}},
 \hskip-2ex
\end{align}
where the inequality is due to
\begin{align}
 (a+t-4)(a+t)^2 = (a+t-1)^3 + \underbrace{ 1 - 3a -a^2 -3t - 2at - t^2}_{\leq 0} \leq (a+t-1)^3\,,
\end{align}
for $a \geq 1, t \geq 0$.

We now multiply equation~\eqref{eq:recAB} with $\frac{w_t}{\eta_t}$, which yields
\begin{align}
a_{t+1} \frac{w_t}{\eta_t} \leq a_t \underbrace{\left(1-\mu \eta_t\right) \frac{w_t}{\eta_t}}_{\leq \frac{w_{t-1}}{\eta_{t-1}}}  - w_t e_t A +  w_t \eta_t B + w_t \eta_t^2 C\,.
\end{align}
and by recursively substituting $a_{t} \frac{w_{t-1}}{\eta_{t-1}}$ we get
\begin{align}
 a_{T} \frac{w_{T-1}}{\eta_{T-1}} \leq \left(1-\mu \eta_0\right) \frac{w_0}{\eta_0} a_0 - \sum_{t=0}^{T-1} w_t e_t A +  \sum_{t=0}^{T-1} w_t \eta_t B + \sum_{t=0}^{T-1} w_t \eta_t^2 C \,,
\end{align}
i.e.
\begin{align}
A \sum_{t=0}^{T-1} w_t e_t \leq  \frac{w_0}{\eta_0} a_0 + B \sum_{t=0}^{T-1} w_t \eta_t  + C\sum_{t=0}^{T-1} w_t \eta_t^2 \,.
\end{align}
We will now derive upper bounds for the terms on the right hand side.
We have $ \frac{w_0}{\eta_0} = \frac{\mu a^3}{4}$,
\begin{gather}
 \sum_{t=0}^{T-1} w_t \eta_t = \sum_{t=0}^{T-1} \frac{4(a+t)}{\mu} = \frac{2 T^2 + 4 aT - 2T}{\mu} \leq \frac{2T(T+2a)}{\mu}\,,
\end{gather}
and
\begin{align}
\sum_{t=0}^{T-1} w_t \eta_t^2 = \sum_{t=0}^{T-1} \frac{16}{\mu^2} = \frac{16 T}{\mu^2}\,.
\end{align}

Let $S_T := \sum_{t=0}^{T-1} w_t = \frac{T}{6}\left(2 T^2 + 6aT - 3T + 6a^2 - 6a  + 1 \right)$. Observe
\begin{align}
 S_T \geq \frac{1}{3}T^3 + \underbrace{aT^2 - \frac{1}{2}T^2 +a^2 T - aT}_{= T^2\left(a-\frac{1}{2}\right) + T(a^2-a)\geq 0} \geq \frac{1}{3}T^3\,.
\end{align}
for $a \geq 1, T \geq 0$.
\end{proof}
}%

\section{Missing Proof for Asynchronous Local SGD}
\label{sec:proofasync}
In this Section we prove Theorem~\ref{thm:async}. The proof follows closely the proof presented in Section~\ref{sec:theory}. We again introduce the virtual sequence
 \begin{align}
  \bar{\xx}_t = \xx_0  - \frac{1}{K}\sum_{h=1}^K \sum_{j=0}^{t-1} \eta_j \nabla f_{i_j^k}(\xx_{j}^k)\,,
 \end{align}
as before.
By the property $T \in \mathcal{I}_T^k$ for $k \in K$ we know that all workers will have written their updates when the algorithm terminates. This assumption is not very critical and could be relaxed, but it facilitates the (already quite heavy) notation in the proof.

Observe, that Lemmas~\ref{lemma:perturbed} and~\ref{lem:barsigma} hold for the virtual sequence $\{\bar{\xx}_t\}_{t=0}^T$. Hence, all we need is a refined version of Lemma~\ref{lem:deviation} that bounds how far the local sequences can deviate from the virtual average.

\begin{lemma}\label{lem:deviation2}
If $\operatorname{gap}(\mathcal{I}^k_T) \leq H$ and 
$\exists \tau > 0$, s.t. $\mathcal{W}_{t}^{k,h} \supseteq [t-\tau]$ for all $k,h \in [K]$, $t \in [T]$, and 
sequence of decreasing positive stepsizes $\{\eta_t\}_{t \geq 0}$ satisfying $\eta_t \leq 2\eta_{t+H+\tau}$ for all $t\geq 0$, 
then
\begin{align}
\frac{1}{K} \sum_{k=1}^K \E \norm{\bar{\xx}_t - \xx_t^k}^2 \leq 12 \eta_t^2 G^2 (H+\tau)^2\,, \label{eq:deviation2}
\end{align}
where $G^2$ is a constant such that  $\E_i \lVert \nabla f_{i}(\xx_t^k)\rVert^2 \leq G^2$ for $k \in [K], t \in [T]$.
\end{lemma}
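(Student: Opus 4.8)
The plan is to follow the proof of Lemma~\ref{lem:deviation} closely, the only new feature being that upon synchronization a worker in the asynchronous scheme does not read the exact virtual average $\bar{\xx}$, but an aggregate $\bar{\bar{\xx}}^k$ that may still be missing the most recently written (delayed) updates. Fix a worker $k\in[K]$ and an iteration $t$, and let $t_0 := \max\{s\in\mathcal{I}_T^k : s\le t\}$ be the last synchronization index of worker $k$. Since $\operatorname{gap}(\mathcal{I}_T^k)\le H$ we have $t-t_0\le H$, and on $[t_0,t)$ worker $k$ performs only local updates. Using $\xx_{t_0}^k=\bar{\bar{\xx}}_{t_0}^k$, I would split
\[
 \xx_t^k-\bar{\xx}_t = \underbrace{\bigl(\xx_t^k-\xx_{t_0}^k\bigr)}_{\text{local drift}} + \underbrace{\bigl(\bar{\bar{\xx}}_{t_0}^k-\bar{\xx}_t\bigr)}_{\text{read-vs-average discrepancy}}\,,
\]
bound each term, use $\norm{\aa+\bb}^2\le 2\norm{\aa}^2+2\norm{\bb}^2$, and average over $k$.

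The local drift is handled exactly as in Lemma~\ref{lem:deviation}: $\xx_t^k-\xx_{t_0}^k=-\sum_{j=t_0}^{t-1}\eta_j\nabla f_{i_j^k}(\xx_j^k)$ is a sum of at most $H$ gradient steps, so by $\norm{\sum_{i=1}^m\aa_i}^2\le m\sum_{i=1}^m\norm{\aa_i}^2$ and $\E_i\norm{\nabla f_i}^2\le G^2$ it contributes $O(\eta_t^2G^2H^2)$. The crux is the second term. Substituting the explicit forms of $\bar{\bar{\xx}}_{t_0}^k$ and $\bar{\xx}_t$ (both measured from $\xx_0$, the former summing only the reported indices $\mathcal{W}_{t_0}^{k,h}\subseteq[0,t_0-1]$, the latter summing all of $[0,t-1]$) gives
\[
 \bar{\bar{\xx}}_{t_0}^k-\bar{\xx}_t = \frac{1}{K}\sum_{h=1}^K\sum_{j\in[0,t-1]\setminus\mathcal{W}_{t_0}^{k,h}}\eta_j\nabla f_{i_j^h}(\xx_j^h)\,,
\]
i.e.\ exactly the updates already applied to the virtual sequence but not yet visible to worker $k$ at the moment of its read.

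The main obstacle, and the only genuinely new step, is to control this index set. The hypothesis $\mathcal{W}_{t_0}^{k,h}\supseteq[t_0-\tau]$ forces every missing index to satisfy $j>t_0-\tau$, while $\mathcal{W}_{t_0}^{k,h}\subseteq[0,t_0-1]$ caps it above; hence $[0,t-1]\setminus\mathcal{W}_{t_0}^{k,h}\subseteq(t_0-\tau,\,t-1]$ contains at most $(t-t_0)+\tau\le H+\tau$ indices per worker. Thus the discrepancy is again a short sum of gradient steps over a window of length $\le H+\tau$, and by Jensen in the average over $h$ together with the same Cauchy--Schwarz estimate it is bounded by $O(\eta_t^2G^2(H+\tau)^2)$.

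Finally, every index $j$ entering either term satisfies $j\ge t_0-\tau\ge t-H-\tau$, so the strengthened stepsize condition $\eta_s\le 2\eta_{s+H+\tau}$ (applied at $s=t-H-\tau$) together with monotonicity yields $\eta_j\le 2\eta_t$ for all relevant $j$; this is precisely why the requirement is tightened from $\eta_t\le 2\eta_{t+H}$ in Lemma~\ref{lem:deviation} to $\eta_t\le 2\eta_{t+H+\tau}$, absorbing the additional delay window of length $\tau$. Collecting the two estimates, averaging over $k\in[K]$, and bounding $H^2,\tau^2\le(H+\tau)^2$ gives a bound of the form $c\,\eta_t^2G^2(H+\tau)^2$; tracking the constants carefully produces the stated value. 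I expect the identification of the missing-update set $[0,t-1]\setminus\mathcal{W}_{t_0}^{k,h}$ and the verification, from the delay hypothesis, that it is confined to a window of length $H+\tau$ to be the delicate part, while the remainder is a direct transcription of the synchronous argument.
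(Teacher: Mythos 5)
Your proof is correct and is essentially the paper's own argument: both split off the local drift since the last read (at most $H$ gradient steps) and then use the delay hypothesis $\mathcal{W}_{t}^{k,h}\supseteq[t-\tau]$ to confine every update not yet visible at that read to a window of at most $H+\tau$ indices, bounding each piece as a short sum of gradient steps via $\norm{\sum_{i=1}^m \aa_i}^2\le m\sum_{i=1}^m\norm{\aa_i}^2$, the bound $G^2$, and the stepsize ratio $\eta_j\le 2\eta_t$ guaranteed by the strengthened condition. The only difference is bookkeeping---the paper inserts an intermediate point $\bar{\xx}_{t_0'}$ with $t_0'=\max\{\min_k t_k-\tau,0\}$ and uses a three-term split with factor $3$, whereas you merge the last two terms into the missing-update sum directly---and neither route literally produces the constant $12$ (your two-term split gives $16$, the paper's own three-term split gives $24$ as written), so the stated constant is simply loose and the discrepancy is immaterial for how the lemma is used.
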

Here we use the notation $[s] = \{\}$ for $s < 0$, such that $[t-\tau]$ is also defined for $t < \tau$.
\begin{proof}
As $\operatorname{gap}(\mathcal{I}^k_T) \leq H$ there exists for every $k \in K$ a $t_k$, $t-t_k \leq H$, such that $\xx_{t_k}^k = \bar{\bar{x}}_{t_k}^k$. Let $t_0 := \min\{t_1,\dots,t_K\}$ and observe $t_0 \geq t-H$. Let $t_0' = \max\{t_0-\tau,0\}$. As $\mathcal{W}_{t}^{k,h} \supseteq [t-\tau]$ for all $k,h \in [K]$, $t \in [T]$, it holds
\begin{align}
 \bar{\bar{\xx}}_{t_k}^k = \bar{\xx}_{t_0'} - \frac{1}{K}\sum_{h=1}^K \sum_{j=t_0'}^{t_k-1} \mathds{1}_{j \in \mathcal{W}_{t_k}^{k,h}} (\eta_j \nabla  f_{i_j^k}(\xx_{j}^k))\,, \label{eq:cont2}
\end{align}
for each $k \in [K]$. In other words, all updates up to iteration $t_0'$ have been written to the aggregated sequence.

We decompose the error term as
\begin{align}
 \norm{\bar{\xx}_t - \xx_t^k}^2 \leq 3 \left(\norm{\xx_t^k - \xx_{t_k}^k}^2 + \norm{\xx_{t_k}^k - \bar{\xx}_{t_0'}}^2 + \norm{\bar{\xx}_{t_0'} - \bar{\xx}_t}^2 \right)\,.
\end{align}
Now, using $\eta_t \geq \eta_{t+1}$, and $t-t_k \leq H$, we conclude (as in~\eqref{eq:cont3})
\begin{align}
\norm{\xx_t^k - \xx_{t_k}^k}^2 \leq \eta_{t_k}^2 H^2 G^2 \leq \eta_{t_0'}^2 H^2 G^2\,.
\end{align}
As $t_k - t_0' \leq \tau$, 
\begin{align}
\norm{\xx_{t_k}^k - \bar{\xx}_{t_0'}}^2  \leq \eta_{t_0'}^2 \tau^2 G^2\,,
\end{align}
and similarly, as $t - t_0' \leq H + \tau$,
\begin{align}
\norm{\tilde{\xx}_{t_0'} - \tilde{\xx}_t}^2 \leq \eta_{t_0'}^2 (H+\tau)^2 G^2\,.
\end{align}
Finally, as $\frac{\eta_{t_0'}}{\eta_{t}} \leq 2$, we can conclude
\begin{align}
 \norm{\bar{\xx}_t - \xx_t^k}^2 \leq 12  \eta_t^2 (H+\tau)^2 G^2\,.
\end{align}
and the lemma follows.
\end{proof}
Now the proof of Theorem~\ref{thm:async} follows immediately.
\begin{proof}[Proof of Theorem~\ref{thm:async}]
As in the proof of Theorem~\ref{thm:main} we rely on Lemma~\ref{lem:averaging} to derive the convergence rate. Again, we have $A=\frac{1}{2}$, $B=\frac{\sigma^2}{K}$, and $C=LG^2(H+\tau)^2$ (Lemma~\ref{lem:deviation2}). It is easy to see that the stepsizes satisfy the condition of Lemma~\ref{lem:deviation2}, as clearly $\frac{\eta_{t_0'}}{\eta_{t}} \leq \frac{\eta_{t_0'}}{\eta_{t_0'+H+\tau}} = \frac{a+t+H+\tau}{a+t} \leq 2$, as $a \geq H+\tau$.
\end{proof}

\section{Comments on Implementation Issues}
\label{sec:comments}
\subsection{Synchronous Local SGD}
\label{sec:y}
In Theorem~\ref{eq:main} we do not prove convergence of the sequences $\{\xx_t^k\}_{t \geq 0}$ of the iterates, but only convergence of a weighted average of all iterates. In practice, the last iterate might often be sufficient, but we like to remark that the weighted average of the iterates can easily be tracked on the fly with an auxiliary sequence $\{\yy_t\}_{t>0}$, $\yy_0 = \xx_0$, without storing all intermediate iterates, see Table~\ref{tab:weights} for some examples.

\begin{table}[H]
\centering
\resizebox{\textwidth}{!}{
\begin{tabular}{l|l|l|l}
criteria & weights & formula & recursive update  \\ \hline
last iterate & - &  $\yy_t = \xx_t$ & $\yy_t = \xx_t$ \\ \hline
uniform average & $w_t = 1$ &$\yy_t = \frac{1}{t+1} \sum_{i=0}^t \xx_i$ & $\yy_t = \frac{1}{t+1} \xx_t + \frac{t}{t+1} \yy_{t-1}$ \\ \hline
linear weights & $w_t = (t+1)$ & $\yy_t = \frac{2}{(1+t)(2+t)} \sum_{i=0}^{t} (i+1) \xx_i$ & $\yy_t = \frac{2}{2+t} \xx_t + \frac{t}{t+2}\yy_{t-1}$ \\ \hline
quadratic weights & $w_t = (t+1)^2$ & $\yy_t = \frac{6}{(t+1)(t+2)(2t+3)} \sum_{i=0}^t (i+1)^2 \xx_i$ & $\yy_t = \frac{6(t+1)}{(t+2)(2t+3)} \xx_t +  \frac{t(1+2t)}{6+7t+2t^2} \yy_{t-1}$
\end{tabular}
}
\caption{Formulas to recursively track weighted averages.}
\label{tab:weights}
\end{table}

\subsection{Asynchronous Local SGD}
As for synchronous local SGD, the weighted averages of the iterates (if needed), can be tracked on each worker locally by a recursive formula as explained above. 

A more important aspect that we do not have discussed yet, is that Algorithm~\ref{alg:async} allows for an easy procedure to balance the load in heterogeneous settings. In our notation, we have always associated the local sequences $\{\xx_t^k\}$ with a specific worker $k$. However, the computation of the sequences does not need to be tied to a specific worker. Thus, a fast worker $k$ that has advanced his local sequence too much already, can start computing updates for another sequence $k' \neq k$, if worker $k'$ is lagged behind. This was not possible in the synchronous model, as there all communications had to happen in sync. We demonstrate this principle in Table~\ref{tab:balance} below for two workers. Note that also the running averages can still be maintained.

\begin{table}[H]
\centering
\resizebox{\textwidth}{!}{
\begin{tabular}{llllllll}
\multicolumn{2}{l}{wall clock time} & $\rightarrow$ & $\rightarrow$ & $\rightarrow$ & $\rightarrow$ & $\rightarrow$ & $\rightarrow$ \\[0.1em] \hline
worker 1 \phantom{$\Big)$}& \multicolumn{1}{|c|}{$\xx_{H}^1 \gets \operatorname{U}(\bar{\bar{\xx}}) $} & \multicolumn{1}{|c|}{$\xx_{2H}^1 \gets \operatorname{U}(\bar{\bar{\xx}}) $} & \multicolumn{1}{|c|}{$\xx_{3H}^1 \gets \operatorname{U}(\bar{\bar{\xx}}) $} & \multicolumn{1}{|c|}{$\xx_{2H}^{{\color{red} 2}} \gets \operatorname{U}(\bar{\bar{\xx}}) $} & \multicolumn{1}{|c|}{$\xx_{4H}^{{\color{red} 2}} \gets \operatorname{U}(\bar{\bar{\xx}}) $}  & \multicolumn{1}{|c|}{$\xx_{4H}^1 \gets \operatorname{U}(\bar{\bar{\xx}}) $} & $\cdots$ \\[0.1em] \hline
worker 2 \phantom{$\Big)$} & \multicolumn{3}{|c|}{$\xx_{H}^2 \gets \operatorname{U}(\bar{\bar{\xx}}) $ } & \multicolumn{3}{|c|}{$\xx_{3H}^2 \gets \operatorname{U}(\bar{\bar{\xx}}) $ }& $\cdots$ \\[0.1em] \hline
\end{tabular}
}
\caption{Simple load balancing. The faster worker can advance both sequences, even when the slower worker has not yet finished the computation. In the example each worker does $H$ steps of local SGD (denoted by the operator $U \colon \R^d \to \R^d$) before writing back the updates to the aggregate $\bar{\bar{\xx}}$. Due to the load balancing, $\tau \leq 3H$.}
\label{tab:balance}
\end{table}

\section{Details on Experiments}
\label{sec:expdetails}
We here state the precise procedure that was used to generate the figures in this report. As briefly stated in Section~\ref{sec:experiments} we examine empirically the speedup on a logistic regression problem, $f(\xx) = \frac{1}{n}\sum_{i=1}^n\log(1+\exp(-b_i\aa_i^\top\xx))+\frac{\lambda}{2}\|\xx\|^2$,
where $\aa_i \in\R^d$ and $b_i \in \{-1,+1\}$ are the data samples. %
The regularization parameter is set to $\lambda=1/n$. We consider the small scale \texttt{w8a} dataset~\citep{Platt99w8a} ($d = 300, n=49749$). 

For each run, we initialize $\xx_0 = \mathbf{0}_d$ and measure the number of iterations\footnote{Note, that besides the randomness involved the stochastic gradient computations, the averaging steps of synchronous local SGD are deterministic. Hence, these results (convergence in terms if numbers of iterations) can be reproduced by just simulating local SGD by using virtual workers (which we did for large number of $K$). For completeness, we report that all experiments were run on an an Ubuntu 16.04 machine with a 24 cores processor Intel\textregistered~Xeon\textregistered~CPU E5-2680 v3 @ 2.50GHz.} (and number of stochastic gradient evaluations) to reach the target accuracy $\epsilon \in \{0.005,0.0001\}$. As we prove convergence only for a special weighted sum of the iterates in Theorem~\ref{thm:main} and not for standard criteria (last iterate or uniform average), we evaluate the function value for different weighted averages $\yy_t = \frac{1}{\sum_{i=0}^t w_i} \sum_{i=0}^t w_i \xx_t$, and consider the accuracy reached when one of the averages satisfies $f(\yy_t)- f^\star \leq \epsilon$, with $f^\star := 0.126433176216545$ (numerically determined). The precise formulas for the averages that we used are given in Table~\ref{tab:weights}.

For each configuration $(K,H,b,\epsilon)$, we report the best result found with any of the following two stepsizes: $\eta_t := \min(32,\frac{cn}{t+1})$ and $\eta_t = 32c$. Here $c$ is a parameter that can take the values $c=2^{i}$ for $i \in \mathbb{Z}$. For each stepsize we determine the best parameter $c$ by a grid search, and consider parameter $c$ optimal, if parameters $\{2^{-2}c,2^{-1}c,2c, 2^2c\}$ yield worse results (i.e. more iterations to reach the target accuracy).

In Figures~\ref{fig:speedup_b=1} and~\ref{fig:speedup_b=16} we give additional results %
for mini-batch sizes $b\in\{1,16\}$.

\begin{figure}[H]
    \centering
    \vspace{-.1cm}
    \begin{subfigure}[b]{0.48\textwidth}
        \includegraphics[width=\textwidth]{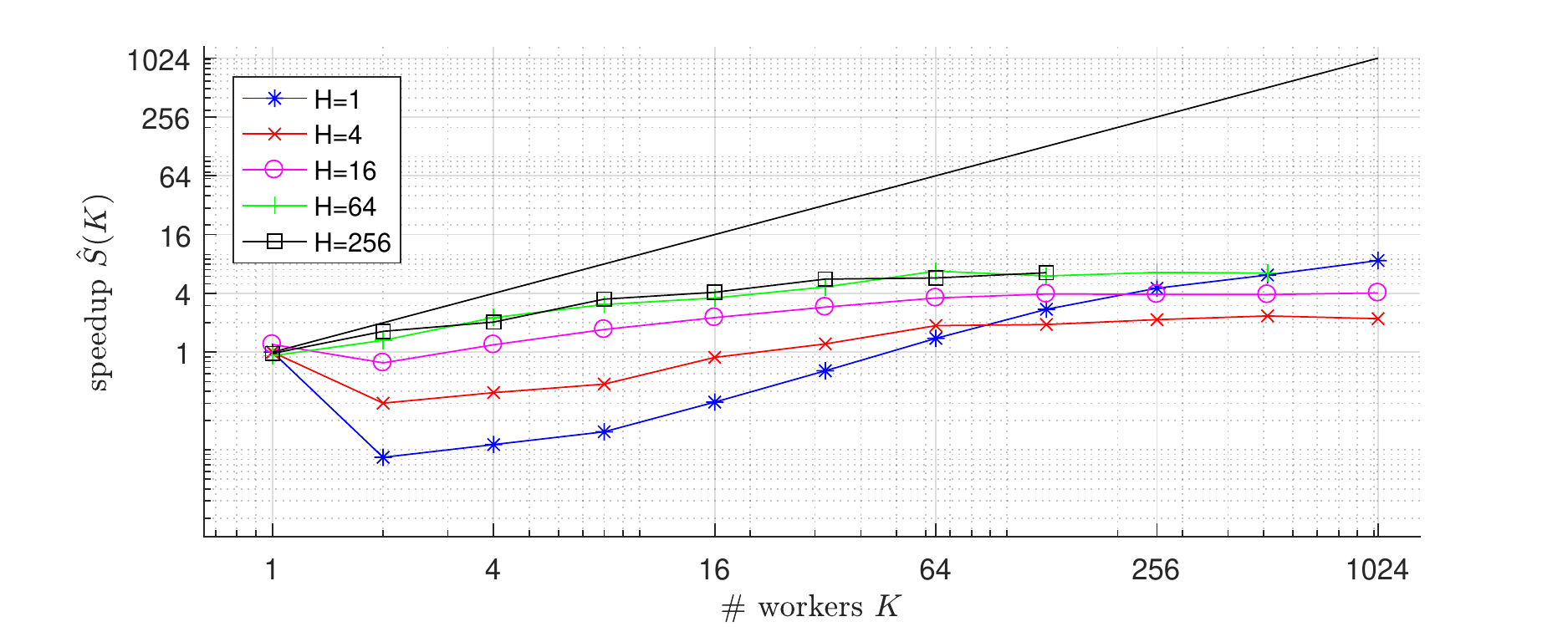}
        \caption{Measured speedup, $\epsilon = 0.005$.}
        \label{fig:speedup_b1_fin}
    \end{subfigure}
    \begin{subfigure}[b]{0.48\textwidth}
        \includegraphics[width=\textwidth]{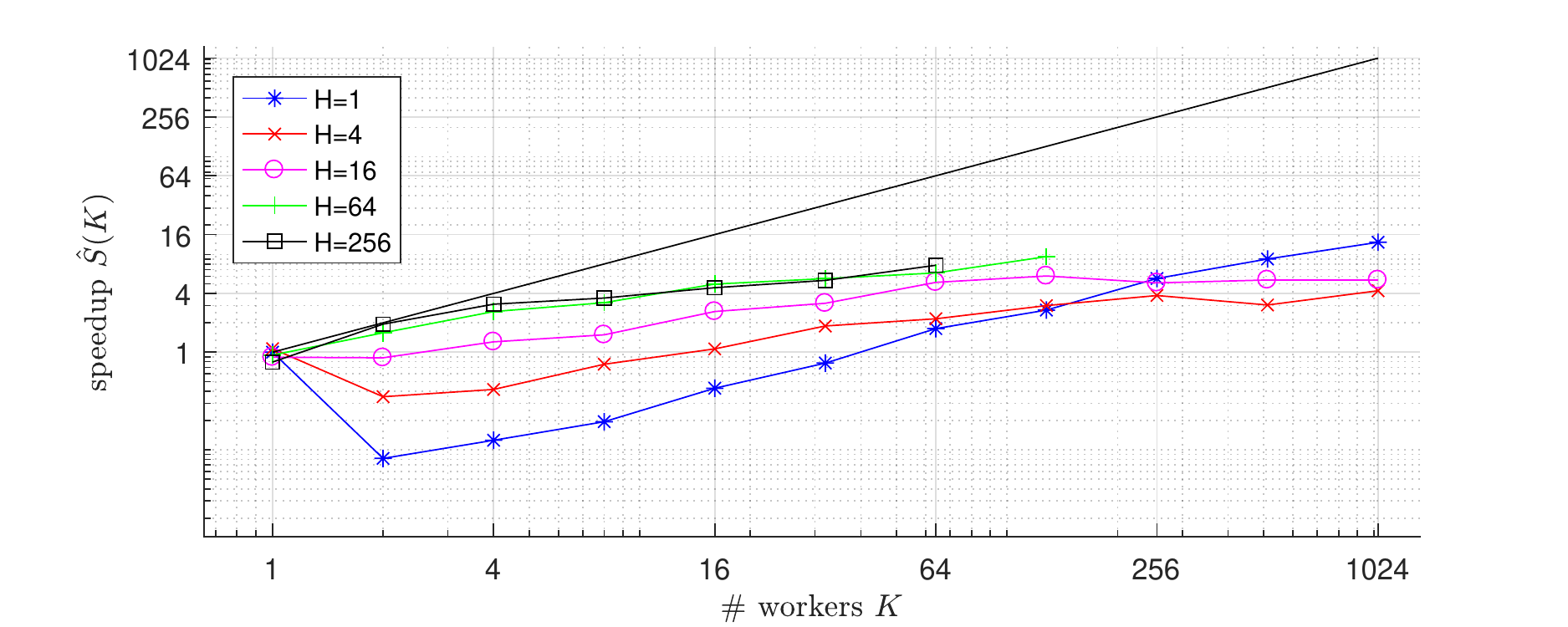}
        \caption{Measured speedup, $\epsilon = 0.0001$.}
        \label{fig:speedup_b1_inf}
    \end{subfigure}
        \vspace{-0.2cm}
    \caption{Measured speedup of local SGD with mini-batch $b=1$ for different numbers of workers $K$ and parameters $H$.}\label{fig:speedup_b=1}
        \vspace{-0.2cm}
\end{figure}

\begin{figure}[H]
    \centering
    \vspace{-.1cm}
    \begin{subfigure}[b]{0.48\textwidth}
        \includegraphics[width=\textwidth]{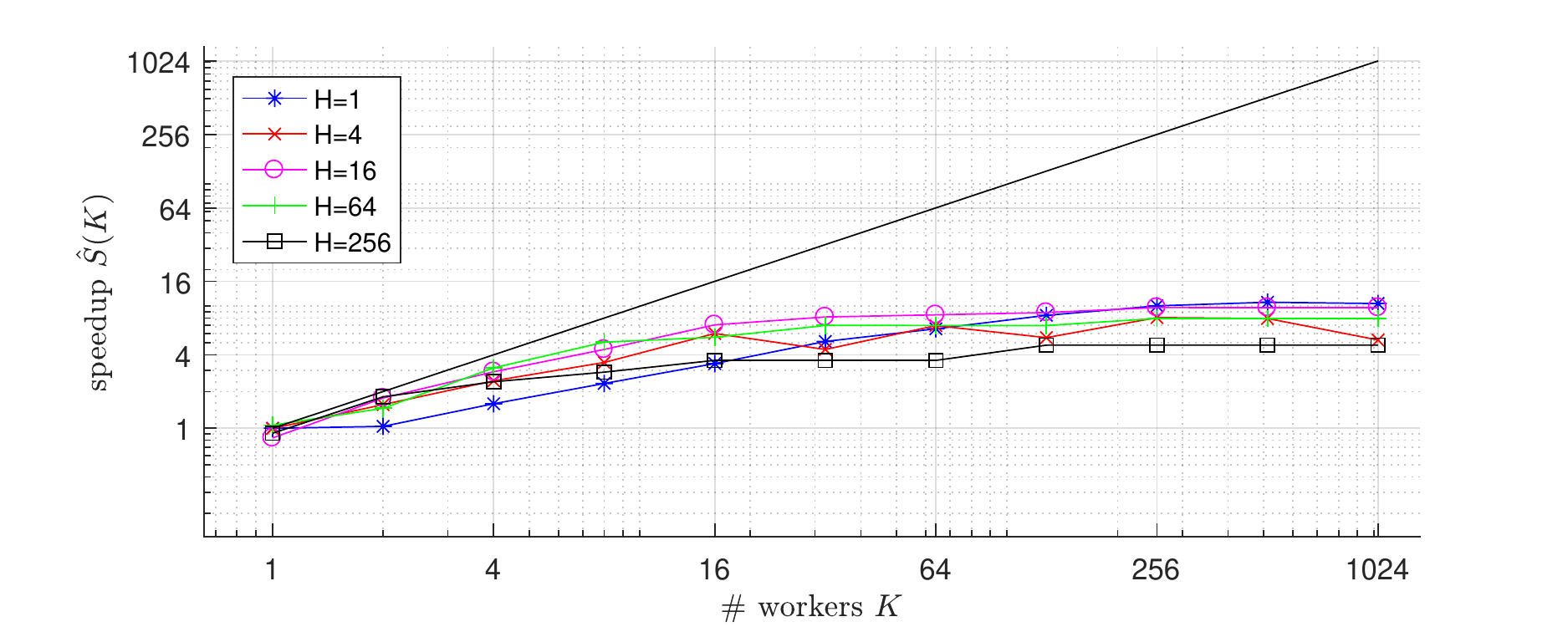}
        \caption{Measured speedup, $\epsilon = 0.005$.}
        \label{fig:speedup_b16_fin}
    \end{subfigure}
    \begin{subfigure}[b]{0.48\textwidth}
        \includegraphics[width=\textwidth]{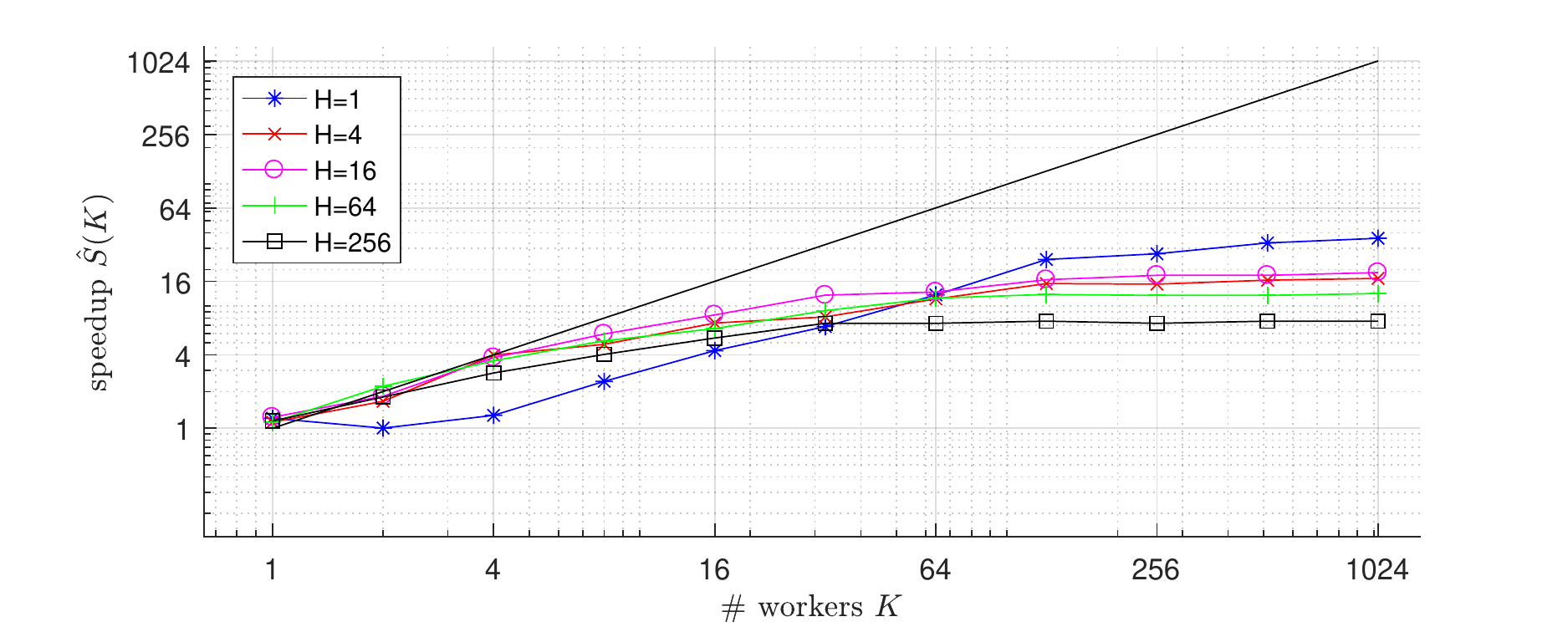}
        \caption{Measured speedup, $\epsilon = 0.0001$.}
        \label{fig:speedup_b16_inf}
    \end{subfigure}
        \vspace{-0.2cm}
    \caption{Measured speedup of local SGD with mini-batch $b=16$ for different numbers of workers $K$ and parameters $H$.}\label{fig:speedup_b=16}
        \vspace{-0.2cm}
\end{figure}

\end{document}